\documentclass[twoside]{amsart}
\usepackage{amssymb}
\usepackage{amsmath}
\usepackage{amsthm}
\usepackage[all]{xy}

\newtheorem{theorem}{Theorem}[section]
\newtheorem{proposition}[theorem]{Proposition}

\newtheorem{principle}[theorem]{Principle}
\newtheorem{lemma}[theorem]{Lemma}
\theoremstyle{definition}
\newtheorem{definition}[theorem]{Definition}

\theoremstyle{remark}
\newtheorem{remark}[theorem]{Remark}

\newenvironment{eq}{\addtocounter{theorem}{1}\begin{equation}}{\end{equation}}

\newcommand{\cC}{{\mathcal C}}

\newcommand{\Z}{\mathbb Z}
\newcommand{\C}{\mathbb C}

\newcommand{\bP}{\mathbb P}

\newcommand{\ra}{\rightarrow}
\newcommand{\lra}{\longrightarrow}
\newcommand{\iso}{\simeq}
\newcommand{\lin}{\sim}

\def\a{{\mathfrak a}}

\parindent=0 pt

\begin{document}

\title{On the Kummer construction}

\author{Marco Andreatta and Jaros{\l}aw A. Wi\'sniewski}

\thanks{Research of the first author was supported by grants of
  Italian Mur-PRIN. Research of the second author was financed by a
  grant of Polish MNiSzW (N N201 2653 33). Moreover, the second author
  thanks University of Trento for supporting his visits. Both authors
  thank the MSRI in Berkeley for hospitality during the last revision
  of the paper. Many thanks to Andrzej Weber for discussions. }

\address{Dipartimento di Matematica, Universita di Trento, I-38050
Povo (TN)} \email{marco.andreatta@unitn.it}

\address{Instytut Matematyki UW, Banacha 2, PL-02097
Warszawa} \email{J.Wisniewski@mimuw.edu.pl}

\subjclass{20C10, 14E15, 14F10, 14J28, 14J32}

\keywords{Kummer construction, abelian variety, finite group action,
  integral representation, Calabi-Yau manifold, symplectic manifold,
  quotient singularity, crepant resolution, cohomology, Betti numbers,
  Poincar\'e polynomial}

\begin{abstract}
We discuss a generalization of the Kummer construction. Namely 
an integral representation of a finite group produces an action on an abelian
variety and, via a crepant resolution
of the quotient, this gives rise to a higher dimensional variety with trivial
canonical class and first cohomology. We use virtual Poincar\'e polynomials with
coefficients in a ring of representations and McKay correspondence to
compute cohomology of such Kummer varieties.
\end{abstract}

\maketitle
\vskip 1cm
\section{Introduction}

Kummer surfaces are constructed in a two step process: (1) divide an
abelian surface by an action of an
involution, (2) resolve singularities of the quotient, which arise
from the fixed points of the action, by blowing them up to
$(-2)$-curves. The result of this process is a K3 surface, this is
because the group action kills the fundamental group of the abelian
surface and preserves the canonical form, and also because the resolution is
crepant. The invariants of this surface can be computed by looking at
the invariants of the involution and the contribution of the
resolution. This construction is classical, see \cite{Kummer} or
\cite{BarthPetersVandeVen}.

It is natural to ask about a generalization of the above procedure.
This involves dividing an
abelian variety by an action of a finite group. Our set up is as
follows:
\begin{itemize}
\item $G$ is a finite group with an irreducible integral representation 
$\rho_\Z: G\ra GL(r,\Z)$ whose fixed point set is
$\{0\}$,
\item $A$ is a complex abelian variety of dimension $d$, with neutral
element, addition and substraction denoted by $0$, $\pm$; note that the construction can 
be carried over starting with a compact complex torus as well.
If $d$ is odd we assume additionally $det(\rho_\Z)=1$, that is
$\rho_\Z: G\ra SL(r,\Z)$.
\end{itemize}

The first step of the generalized Kummer construction is achieved by
the induced action 
\begin{eq}\rho_A=\rho_\Z\otimes_\Z A:\ \ G\lra Aut(A^r)\end{eq}
which is obtained by identification $A^r=\Z^r\otimes_\Z A$. In other
words, $G$ acts on $A^r$ with integral matrices coming from the
representation $\rho_\Z$ and we consider the quotient $Y:= A^r / G$.

If a crepant resolution of $Y$ exists, $X \ra Y$,  we obtain a manifold $X$ of dimension $rd$
with $K_X\lin 0$ and $H^1(X,\C)=0$, as in the original Kummer
construction. The former comes from our assumptions regarding both the
action and the resolution. The latter follows because
$H^1(A^r/G,\C)=H^1(A^r,\C)^G=(\C^{2rd})^G$ is trivial by our
assumptions and the crepant resolution of $A^r/G$ does not change first
cohomology, as one can see using for instance the Leray spectral sequence and the 
rationality of quotient singularities.

\smallskip
Computing higher cohomology of $X$ in general is a hard task and we need the following extra properties of 
the (crepant) resolution. 
Let $\a$ be the tangent space of $A$ at identity (or at any point $p$), i.e. the Lie algebra of
holomorphic vector fields tangent to $A$. The induced action $\rho_\a=\rho_\Z\otimes_\Z\a:
G\ra GL(\a^r)$, which splits into $d$ copies of complexified representation
$\rho_\C=\rho_\Z\otimes_\Z\C$, is the tangent action. 
For any point $p \in A^r$ with non trivial isotropy group $G_p$ the action
${\rho_\a}_{| G_p} = {d\cdot\rho_\C}_{|G_p}$ is a representation of  $T_pA^r \iso \C^{dr}$.
In analytic or \'etale topology the action of
$G_p$ around $p$ is equivalent to the action
${\rho_\a}_{| G_p}$ in a neighborhood of $0$ in  $T_pA^r \iso \C^{dr}$.
For more details on the notation see the next section.

We will assume that:

a)   Over the set of points of
$A^r/G$ which represent orbits with the same isotropy the resolution
is a locally product (see the definition \ref{locally-product-resolution}).

b) McKay correspondence holds for the crepant resolutions of a
quotient singularity $\C^{rd}/G_p$; that is there exists a
canonical relation between the conjugacy classes of elements in $H= G_p$
and the cohomology  of the crepant resolution (see e.g.~\cite{ReidBourbaki},
\cite{BertinMarkushevich}, \cite{GiKa}).

\smallskip
By standard group action arguments we will have a description of the action of
$G$ on the set of points with the same isotropy (see \ref{structure-of-strata});
then we will glue these local resolutions to a global projective
resolution $X\ra Y= A^r/G$. This step may be non-obvious in case when the
singular points are non-isolated and their resolution is not obtained
in a canonical way; here we impose the assumption that  the resolution
is a locally product. 
Under the above assumptions, at the end of this procedure, we obtain a general formula to compute the
Poincar\'e polynomial $P_X(t)$ of $X$, that is the Betti numbers of $X$, as summarized in \ref{principle}. 

\medskip
In the second part of the paper we consider some examples which satisfy our assumptions.
In particular in section \ref{elliptic} we start with $d=1$, i.e.~$A$ is an elliptic curve, and we apply the construction to some representations in $SL(2, \Z)$ and in $SL(3, \Z)$. We compute in these cases,  with our procedure, the cohomolgy of the resulting Kummer surfaces and Calabi Yau threefolds.  A paper by Maria Donten \cite{Donten} provides a complete classification of Kummer 3-folds.

In section \ref{Got} we take $d=2$, i.e an abelian surface, and we consider the standard representation of the symmetric group $S_n$ in $SL(n-1, \Z)$; in this case the Kummer construction will produce the so called generalized Kummer manifolds, $Kum^{(n-1)}$, introduced by Beauville 
and Fujiki (see \cite{BeauvilleJDG}) and whose cohomology was already computed by G\"ottsche and others
(see \cite{Gottschebook}). We explain how to compute cohomology with our methods in this case and we do explicit computation for the case $n=4$, i.e. $X$ is a $6$-dimensional Kummer manifold (a computation for the case $n=3$ is done in the last section).

Finally in section \ref{general} we consider a $4$-dimensional abelian variety $A$ and we  prescribe the 
action of $G$ only on the tangent space at the unit, i.e we fix
the complex representation $\rho _\C: G \ra GL(\a)$ and we do not ask  a priori that it comes from an integral representation $\rho_\Z$. Moreover we consider three special types of groups and representations coming from a recent theorem which characterizes a class of symplectic singularities admitting a (local) symplectic resolution: see 
section \ref{sympl} where the results in \cite{Bell} and also in \cite{GiKa}, \cite{Lehn-Sorger} are summarized.
We apply the Kummer construction in this context and in one case we prove it can not lead to a global crepant resolution.

\section{Notation and preliminaries}\label{trivialities}
Throughout the paper we will use the set up introduced above: $G$ is a
finite group with representation $\rho_\Z$ in $GL(r,\Z)$; any
extension of $\rho$ will be denoted by a subscript, i.e.~$\rho_\C$
denotes extension of $\rho$ over $\C$. Sometimes we will skip the
subscript if the context is clear. Next, $A$ is a complex abelian
variety of dimension $d$ and $G$ acts on $A^r$ via $\rho_A$. By $Y=A^r/G$
or $A^r/\rho_A$ we will denote the quotient, which we understand as
the space of orbits of the action $\rho_A$ and by $\pi_G: A^r\ra A^r/G$
the quotient morphism.  

Moreover $f: X\ra Y=A^r/G$ is assumed to be a
crepant resolution of the quotient. That is, $X$ is smooth and
projective, $f$ is birational and $K_X=f^*K_Y$.

\smallskip
We denote by $\Z_n$ a cyclic
group of order $n$, by $S_n$ the symmetric group in $n$ letters, or
group of permutations of $n$ elements, and by $D_{2n}$ a dihedral group of
order $2n$, i.e.~semidirect product $D_{2n}=\Z_{n}\rtimes\Z_2$. 

For a group element $g\in G$ by $\langle g\rangle$ we denote the subgroup
generated by $g$.  The normalizer of a subgroup $H<G$ is denoted by
$N_G(H)$, while $W_G(H)$ stands for the quotient group $N_G(H)/H$. By
$[H]_G$ we denote the conjugacy class of $H$ in $G$, that is the set
of subgroups $\{gHg^{-1}<G: g\in G\}$. We skip the subscript whenever
the group in which the above objects are defined is clear from the
context. The partially ordered set of conjugacy classes of subgroups
of $G$ will be denoted by $\cC(G)$ with the partial order denoted by
$\prec$. Recall that the cardinality $\#[H]_G$ is equal to the index
$[G:N_G(H)]$ and for $H'\in [H]_G$ we have $W_G(H')=W_G(H)$, so this
group will be denoted by $W([H]_G)$ and called a Weyl group of $H$ in
$G$. The conjugacy class of an element $h\in G$ is the set
$[h]_G=\{ghg^{-1}: g\in G\}$, note that usually $[\langle h\rangle]\ne
[h]$.

Let $G$ be a group acting on a set $B$ and $H<G$ a subgroup;  by $B^H$
(or $Fix(H)$) we denote the subset of points of $B$ fixed by
$H$ while $B^H_0\subset B^H$ is the set of points whose isotropy (or
stabilizer) is exactly $H$.  Clearly, $B^H\setminus B^H_0$ consists of
points whose isotropy is bigger than $H$. We will use repeatedly the
fact that if $H'=gHg^{-1}$ then $B^{H'}=gB^H$. In particular, the action of $G$ 
defines an action of $N(H)$ and of $W(H)$
on $B^H$ which is free on $B^H_0$.

\smallskip
Let $\a$ be the tangent space of an abelian variety $A$ at identity, i.e. the Lie algebra of
holomorphic vector fields tangent to $A$, and let  $exp: \a\ra A$ be the
exponential map.  The induced action $\rho_\a=\rho_\Z\otimes_\Z\a:
G\ra GL(\a^r)$ splits into $d$ copies of complexified representation
$\rho_\C=\rho_\Z\otimes_\Z\C$. The representation $\rho_\a$ is called the tangent action
and it is in fact  tangent
to $\rho_A$: for $g\in G$ and $p\in A^r$ the derivative of the map
$\rho_A(g): A^r\ra A^r$ at $p$ is $\rho_\a(g): T_{p} A^r =\a^r
\ra T_{g(p)} A^r=\a^r$.  Moreover, if $exp^r: \a^r\ra A^r$ is a natural
extension of $exp$ then for every $g\in G$ and $v\in \a^r$
$$exp^r(\rho_\a(g)(v))=\rho_A(g)(exp^r(v))$$

Let $p\in A^r$ be a point with non-trivial isotropy (or
stabilizer) group $G_p$. In analytic or \'etale topology the action of
$G_p$ around $p$ is equivalent to the induced action
${\rho_\a}_{| G_p} = {d\cdot\rho_\C}_{|G_p}$ in a neighborhood of $0$ in  $T_pA^r \iso \C^{dr}$;
this is because ${\rho_\a}_{| G_p}$ is just
the tangent action of $G_p$ at the tangent space $T_pA$.

\smallskip
More generally if we have an endomorphism $g: A \ra A$  with $g(0) = 0$
it has an associated linear analytic representation  $\eta_{an}(g) : \C^d \ra \C^d$, which is the corresponding 
analytic endomorphism on the universal covering or the derivative map at the origin, (see for instance Proposition 
1.2.1 in  \cite{Birkenhake-Lange}).
The following is the well-known (holomorphic) Lefschetz fixed point formula for
the case of complex tori, as for instance on \cite{Birkenhake-Lange}.

\begin{theorem}
\label{Lefschetz}
Let $g: A \ra A$ be an endomorphism with $g(0) = 0$ and let $\eta_{an}(g) : \C^d \ra \C^d$ as above.  The closed analytic subvariety of $A$ consisting
of the fixed point of $g$, denoted by $Fix(g)$, has dimension equal to the multiplicity of $1$ as an eigenvalue of
$\eta_{an}(g)$. If it is zero dimensional then $|Fix(g)| = |det(1 - \eta_{an}(g))|^2$. 
\end{theorem}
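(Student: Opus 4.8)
The plan is to lift everything to the universal cover and reduce the statement to linear algebra on the period lattice. Write $A=\a/\Lambda$ with $\a\iso\C^d$ the tangent space at the origin and $\Lambda\iso\Z^{2d}$ the period lattice. Since $g(0)=0$, the endomorphism $g$ is a homomorphism of $A$ and, by the cited Proposition 1.2.1 of \cite{Birkenhake-Lange}, it lifts to the $\C$-linear map $\eta:=\eta_{an}(g):\a\ra\a$, which satisfies $\eta(\Lambda)\subseteq\Lambda$. Setting $h:=g-\mathrm{id}$, a homomorphism of $A$ with analytic representation $\eta-1$, I would identify the fixed set with a kernel: a point $[v]\in A$ lies in $Fix(g)$ if and only if $(\eta-1)(v)\in\Lambda$, so that $Fix(g)=N/\Lambda$ where $N:=(\eta-1)^{-1}(\Lambda)\subseteq\a$.

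First I would prove the dimension statement. The set $N$ is a union of cosets of $\ker(\eta-1)$, since each nonempty fibre of $\eta-1$ over a lattice point is a translate of $\ker(\eta-1)$. Hence $Fix(g)$ is a finite union of translates of the subtorus $exp(\ker(\eta-1))\subseteq A$, and therefore $\dim_\C Fix(g)=\dim_\C\ker(\eta-1)$, which is exactly the multiplicity of $1$ as an eigenvalue of $\eta$ (this is the geometric multiplicity, coinciding with the algebraic one whenever $\eta$ is semisimple, in particular for the finite-order elements relevant to the Kummer construction).

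Next, assuming $Fix(g)$ is zero-dimensional, equivalently $\ker(\eta-1)=0$ and $\eta-1$ invertible, the count becomes a lattice index computation. Since $\eta-1$ preserves $\Lambda$ we have $\Lambda\subseteq N$, both are full lattices in $\a$, and $\#Fix(g)=[N:\Lambda]$. The map $\eta-1$ restricts to a group isomorphism $N\xrightarrow{\sim}\Lambda$ carrying the sublattice $\Lambda\subseteq N$ onto $(\eta-1)(\Lambda)$, whence
\[ [N:\Lambda]=[\Lambda:(\eta-1)\Lambda]=\bigl|\det{}_{\R}(\eta-1)\bigr|, \]
the last equality being the standard fact that the index of a sublattice equals the absolute value of the real determinant of the linear map cutting it out. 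Finally I would invoke the elementary identity $\det{}_{\R}(M)=|\det{}_{\C}(M)|^2$ for a $\C$-linear operator $M$ viewed as a real operator on $\a\iso\R^{2d}$; applied to $M=\eta-1$ and using $|\det{}_{\C}(\eta-1)|=|\det{}_{\C}(1-\eta)|$, this yields $\#Fix(g)=|\det(1-\eta_{an}(g))|^2$.

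The argument is essentially formal once the fixed set is recognized as the kernel of the homomorphism $g-\mathrm{id}$; the only points requiring care are the two pieces of linear algebra at the end, namely the identification $[N:\Lambda]=|\det{}_{\R}(\eta-1)|$ via the induced isomorphism and the passage $\det{}_{\R}=|\det{}_{\C}|^2$ between the rational and analytic representations. Neither is a genuine obstacle, so the main thing to get right is simply the bookkeeping of which lattice maps to which under $\eta-1$.
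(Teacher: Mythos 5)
Your proof is correct. The paper itself offers no proof of this statement: it is quoted as the well-known holomorphic Lefschetz fixed point formula for complex tori with a reference to \cite{Birkenhake-Lange}, and your argument is essentially the standard one from that source --- identify $Fix(g)$ with $\ker(g-\mathrm{id}_A)=N/\Lambda$ for $N=(\eta-1)^{-1}(\Lambda)$, read off the dimension from $\ker(\eta-1)$, and in the zero-dimensional case compute $\#Fix(g)=[N:\Lambda]=[\Lambda:(\eta-1)\Lambda]=|\det_{\R}(\eta-1)|=|\det_{\C}(1-\eta)|^2$. Your parenthetical remark that the dimension is the \emph{geometric} multiplicity of the eigenvalue $1$ (agreeing with the algebraic one for the finite-order automorphisms relevant here) is a worthwhile precision, since the theorem as stated is slightly loose on this point and fails for non-semisimple $\eta$ if read with algebraic multiplicity.
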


\subsection{Groups and representations}
\label{representations}

Integral representations of finite groups are fairly well understood; in general we will
refer to \cite{Newman} and  \cite[Ch.~XI]{CurtisReiner} or, for an elementary overview,  to \cite{FinGpsSurveyAMM}. 

For a finite group $G$ we consider the  ring $R(G)$ of complex
representations of $G$. By $d\cdot\rho$ we denote the sum of $d$ copies of
the representation $\rho$ while by $\rho^{\otimes d}$ and $\rho^{\wedge d}$ we
denote $d$-th tensor and, respectively, alternating power of $\rho$.
Complex representations of rank 1 will be denoted by roots
of unity.  

We have an additive map $\mu_0: R(G)\ra\Z$ which to a
representation $\rho$ assigns the rank of its maximal trivial
subrepresentation.  If $\rho_\Z\in GL(r,\Z)$ is an integral
representation of rank $r$ with complexification $\rho_\C$ then
$r_0=\mu_0(\rho_\C)$ is the rank of the maximal trivial
subrepresentation of $\rho_\Z$ as well,
c.f.~\cite[Thm.~73.9]{CurtisReiner}. Indeed, set
$\Lambda(\rho_\Z)=\{v\in\Z^r: \forall g\in G\ \ \rho_\Z(g)(v)=v\}$
then $\Lambda(\rho_\Z)$ is a subgroup of $\Z^r$ whose extension to
$\C$ is the maximal trivial subrepresentation of $\rho_\C$. Moreover,
we note that $\Lambda(\rho_\Z)$ is a saturated , i.e.~if $n\cdot
v\in\Lambda(\rho_\Z)$ then $v\in\Lambda(\rho_\Z)$. Thus the quotient
$\Z^r/\Lambda(\rho_\Z)$ has no torsions, $\rho_\Z$ descends to a representation
$\eta_\Z: G\ra GL(\Z^r/\Lambda(\rho_\Z))=GL(r-r_0,\Z)$ and we have a
$G$-equivariant exact sequence, with trivial action on the kernel,
\begin{eq}
0\ra\Lambda(\rho_\Z)\iso\Z^{r_0}\ra\Z^r\ra\Z^r/\Lambda(\rho_\Z)\iso\Z^{r-r_0}\ra 0
\label{exact-sequence}\end{eq}
We will say that $\rho_\Z$ is a pullback of $\eta_\Z$.

\smallskip
Let $S_{r+1}$ be the symmetric group; a {\em natural} representation $\nu_\Z: S_{r+1}\ra GL(r+1,\Z)$ is defined
by permuting coordinates. That is, for a point
$(e_0,e_1,\dots,e_r)\in\Z^r$ (the choice of coordinates will become
clear later) and a permutation $\sigma\in S_{r+1}$ we set
$$\nu_\Z(\sigma)(e_0,\dots,e_r)=(e_{\sigma(0)},\dots,e_{\sigma(r)})$$
In other words, $\nu_\Z(S_{r+1})$ is generated by elementary matrices
$E_{ij}$ responsible for transposing $i$-th and $j$-th vectors of the
chosen basis. Clearly, $(E_{ij}^T)^{-1}=E_{ij}$, where $^T$ denotes
transposition of the matrix, so this representation is isomorphic to
its dual. The natural representation of $S_{r+1}$ contains a fixed
subspace $e_0=\cdots=e_r$ and thus, as above in \ref{exact-sequence},
$\nu_\Z$ is a pull-back of a {\em quotient} representation $\eta_\Z:
S_{r+1}\ra GL(r,\Z)$. On the other hand, we have a $S_{r+1}$-invariant
subspace $e_0+\cdots+e_r=0$ which defines a representation $\rho_\Z:
S_{r+1}\ra GL(r,\Z)$ which we call {\em standard}
representation. Hence the standard representation, being the kernel of
$(e_0,\dots,e_r)\mapsto e_0+\cdots+e_r$, is dual (as $\Z$-module) to
quotient representation. In addition, the $S_{r+1}$-equivariant
composition of inclusion and quotient
$\Z^r\hookrightarrow\Z^{r+1}\ra\Z^r$ makes $\rho_\Z$ a
subrepresentation of $\eta_\Z$ with torsion cokernel and induces an
isomorphism for complexifications $\rho_\C=\eta_\C$ which implies
splitting of complex representation $\nu_\C={\bf 1}_\C+\rho_\C$.
However, $\eta_\Z$ is not conjugate to $\rho_\Z$ in $GL(r,\Z)$,
c.f.~\cite[p.~505]{CurtisReiner}.

\smallskip
Let $G_{r,m} := \Z_m^r\rtimes S_r$ be the semidirect product, where $S_r$ acts on $\Z_m^r$
by permutations. 
We have a natural action of $G_r$ on $\C^r$, namely the group $\Z_m^r$ acts on $\C^r$ diagonally 
and $S_r$ by permutations of the coordinates. We have
a sequence of quotients
\begin{eq}\C^r\lra (\C/\Z_m)^r\lra (\C/\Z_m)^r/S_r= \C^r/G_{r,m}.\end{eq}

\smallskip
Let us finally describe a special group, the binary tetrahedral group $T$, 
and its (complex) representations; we follows the description given in  \cite{Lehn-Sorger}. 
$T$  is the preimage of the symmetric
group of a regular tetrahedron, $T_0$, via the natural map $SU(2) \ra SO(3)$. As a subgroup of $SU(2)$ it is generated by the elements
$$ I= \left(\begin{array}{rr}i&0\\0&-i\end{array}\right) \hbox{     \ \ and \ \ \    }  \tau = -{1\over 2}\left(\begin{array}{rr}1+i&-1+i\\1+i&1-i\end{array}\right).$$
It can also
be described as the semidirect product $Q_8 \rtimes \Z_3$ of the quaternion group 
$Q_8 =\{\pm 1, \pm I, \pm J, \pm K\}$ and the cyclic group
of order $3$.
This group has $7$ irreducible complex representations, we are interested in the two dimensional ones. 
The standard arising from the embedding $T \subset SU(2)$, $\rho_0$, and other two being $\rho_j := \rho_0 \otimes \C_j$, for $j =1,2$, where $\C_j$ is the $1$-dimensional representation given by the multiplication by a third root of unity $\epsilon _3^j$. The last two are dual to each other.

\subsection{Crepant resolutions.}\label{tools}
Working with Kummer construction requires the 
use of crepant resolutions of quotient Gorenstein
singularities.  From dimension 4 on they are far from being understood.
The following is a list of references which we found useful,  it is of course not exaustive: ~\cite{ReidYPG}, \cite{DaisHenkZiegler}, \cite{ReidBourbaki}
for crepant resolution of canonical
singularities and \cite{GiKa}, \cite{FuSurvey} for the symplectic case.

In what follows we will use known facts about resolving 2 dimensional singularities,
for which we refer to ~\cite{BarthPetersVandeVen}. We will also use the following elementary
observation about constructing consecutive resolutions.

\begin{lemma}
\label{solvableresolution}
Consider finite subgroups $H<G<SL(n,\C)$, where $H\triangleleft G$ is
a normal subgroups of $G$. Let $X_H\ra\C^n/H$ be a resolution of the
quotient singularities which is $G/H$-equivariant.  That is, the
quotient group $G/H$ acts on $\C^n/H$ and assume that this
action lifts up to $X_H$. If $X_G \ra X_H/(G/H)$ is a
resolution then the composition with the induced map $X_G\ra
X_H/(G/H)\ra\C^n/G$ is a resolution of the quotient singularity.  If
both intermediate resolutions are crepant then the resulting
resolution is crepant as well.
\end{lemma}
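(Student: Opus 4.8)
The plan is to follow a single invariant volume form through the whole tower of quotients and resolutions, so that crepancy of the composite reduces to crepancy of its two factors. First I would record the identification $\C^n/G\iso(\C^n/H)/(G/H)$, which holds because $H$ is normal: the $G$-orbit of a point is exactly the union of the $H$-orbits making up its $(G/H)$-orbit. The $G$-action on $\C^n/H$ factors through $G/H$ (the subgroup $H$ acts trivially on $H$-orbits), and by hypothesis this action lifts to $X_H$; taking $(G/H)$-quotients of the equivariant morphism $X_H\ra\C^n/H$ then produces the induced morphism $q:X_H/(G/H)\ra(\C^n/H)/(G/H)=\C^n/G$ appearing in the statement.

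That the composition $f:X_G\ra\C^n/G$ is a resolution is then formal. Properness holds since $f$ factors through the two proper maps $X_G\ra X_H/(G/H)$ and $q$, and smoothness of $X_G$ is part of the assumption that $X_G\ra X_H/(G/H)$ is a resolution. For birationality I would pick a $(G/H)$-invariant open dense $U\subset\C^n/H$ over which $X_H\ra\C^n/H$ is an isomorphism; then $q$ is an isomorphism over the image of $U$ in $\C^n/G$, hence birational, and composing with the birational map $X_G\ra X_H/(G/H)$ shows $f$ is birational.

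For crepancy the key object is $\omega=dz_1\wedge\cdots\wedge dz_n$ on $\C^n$. Since $G<SL(n,\C)$, $\omega$ is $G$-invariant, so it descends to nowhere-vanishing generators of $K_{\C^n/H}$ and $K_{\C^n/G}$; in particular both quotients are Gorenstein with trivial canonical class. Crepancy of $g:X_H\ra\C^n/H$ means exactly that $g^*\omega$ extends to a nowhere-vanishing holomorphic $n$-form $\tilde\omega$ on the smooth variety $X_H$, and since $g$ is $(G/H)$-equivariant and $\omega$ is $(G/H)$-invariant, $\tilde\omega$ is $(G/H)$-invariant on the dense locus where $g$ is an isomorphism, hence everywhere. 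At any point of $X_H$ with nontrivial stabilizer the linearized $(G/H)$-action preserves the nonzero value of $\tilde\omega$, so it lies in $SL$ of the tangent space; thus $X_H/(G/H)$ has at worst Gorenstein canonical quotient singularities, and $\tilde\omega$ descends to a nowhere-vanishing generator of $K_{X_H/(G/H)}$. As this descended form maps under $q$ to the descent of $\omega$ on $\C^n/G$, the birational map $q$ introduces no zeros or poles, i.e.~$K_{X_H/(G/H)}=q^*K_{\C^n/G}$, so $q$ is crepant. Writing $h:X_G\ra X_H/(G/H)$ for the second resolution and using crepancy of both factors, discrepancies add along the composition, $K_{X_G}=h^*K_{X_H/(G/H)}=h^*q^*K_{\C^n/G}=f^*K_{\C^n/G}$, which is the desired crepancy.

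The main obstacle is the middle step: making sense of $K_{X_H/(G/H)}$ and of crepancy on the singular intermediate space $X_H/(G/H)$. Everything hinges on the lifted $(G/H)$-action preserving the everywhere-nonzero form $\tilde\omega$, which forces each stabilizer to act with determinant $1$ on the relevant tangent space; this is precisely what keeps the intermediate quotient Gorenstein and lets the volume form descend, so that crepancy can be read off directly from $\tilde\omega$ and then composed.
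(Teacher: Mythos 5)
Your proof is correct. Note that the paper itself offers no proof of this lemma --- it is introduced only as an ``elementary observation'' --- so there is no argument of the authors' to compare against; the route you take, tracking the $G$-invariant volume form $dz_1\wedge\cdots\wedge dz_n$ through both quotients and both resolutions, is the natural one, and you correctly isolate and resolve the only real issue, namely that the invariance of the nowhere-vanishing lift $\tilde\omega$ forces the stabilizers of the $(G/H)$-action on $X_H$ to act with determinant $1$, so that the intermediate quotient $X_H/(G/H)$ is Gorenstein with trivial canonical class and crepancy can be composed.
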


\smallskip
Finally one of our tools will be McKay correspondence which allows to describe
the structure of a crepant resolution of a quotient singularity in
terms of the conjugacy classes, or representations, of the groups
itself, see \cite{ReidBourbaki}, for an exposition in this regard, as well as \cite{KaledinInv},  \cite{GiKa}.

\subsection{Symplectic case}
\label{sympl}

Let $G$ be a finite group with a complex representation $\rho_\C : G \ra GL(V)$. Then $G$ has a symplectic representation $\rho\oplus \rho^*: G \ra Sp(V \oplus V^*)$, where $\rho*$ is the dual representation: the symplectic form preserved is given by the identity
in $V\otimes V^*$.   Moreover if the representation $\rho_\C$ preserves a non degenerate symmetric $2$-form on $V$  then there is a 
$G$-equivariant isomorphism $V \iso V^*$; in this case therefore $2\rho_\C: G \ra Sp(V\oplus V)$ is a symplectic representation.
A primary example is the case when $G$ is a Weyl group  acting on the
lattice of roots of a simple Lie algebra: the action of $G$ preserves
the Killing form (see \cite{BourbakiLie}).

Let $\rho: G \ra Sp(V)$ be a complex symplectic representation. The symplectic form $\sigma$ on $V$ descends to a symplectic form $\hat \sigma$ on the regular part of $V/G$. A proper morphism $f: X \ra V/G$ is a symplectic resolution if $X$ is smooth and $f^*(\hat \sigma)$ extends to a symplectic form on $X$.

More generally a normal variety $Y$ is called a symplectic variety if its smooth part admits a holomorphic symplectic form $\sigma$ whose pull back to any resolution $f: X\ra Y$ extends to a holomorphic $2$-form on $X$.
If this extended holomorphic two form is a symplectic form then $f$ is called a symplectic resolution.
Note that if $Y$ is a symplectic variety and $f: X \ra Y$ is a resolution, then $f$ is symplectic if and only if it is crepant (see for instance proposition 1.3 in \cite{FuSurvey}).

Symplectic resolutions are very rare and they have been considered by a number of people; the following two results give necessary conditions for the existence of a symplectic resolution for quotient singularities. They were proved in  \cite{VerbitskyAsian} and \cite{KaledinSelecta}; see the sections 3 and 4 of the survey  \cite{FuSurvey}
also for appropriate references.

\begin{proposition} Let $X$ be a smooth irreducible symplectic variety and $G$ a finite group of
symplectic automorphisms on X. Assume that $Y = X/G$ admits a
symplectic resolution, then the subvariety $F = \bigcup_{g\not= 1} Fix(g) \subset X$ is either empty or of pure codimension $2$ in $X$. In particular if  $Y$ has an isolated symplectic singularity, then it admits a
symplectic resolution only it is of dimension $2$. 
\label{semism}
\end{proposition}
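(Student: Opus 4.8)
The plan is to reduce the global statement to a purely local, linear one by means of the \'etale/analytic slice description recalled in Section~\ref{trivialities}, and then to invoke the known structure of symplectic resolutions of \emph{linear} quotients. The starting observation is that for every $g\in G$ with $g\neq 1$ the fixed locus $Fix(g)$ is smooth and of even codimension, with the ambient symplectic form restricting to a symplectic form on it. Indeed, at a fixed point $p$ the differential $dg_p$ lies in $Sp(T_pX)$ and has finite order, hence is semisimple with eigenvalues paired as $(\lambda,\lambda^{-1})$ by the symplectic form; therefore the $1$-eigenspace $T_pFix(g)=\ker(dg_p-1)$ is a symplectic, hence even-dimensional, subspace of $T_pX$. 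In particular no component of $F=\bigcup_{g\neq 1}Fix(g)$ can have codimension $1$, so it remains only to exclude components of codimension $\geq 4$.

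First I would localize. Let $Z$ be an irreducible component of $F$, pick a generic point $p\in Z$, and set $H=G_p$; by genericity the fixed locus of $H$ agrees with $Z$ near $p$. By the slice description, in analytic (or \'etale) topology a neighbourhood of the image of $p$ in $Y=X/G$ is isomorphic to a neighbourhood of $0$ in $\C^{2n}/H$, where $H\subset Sp(2n,\C)$ acts through $dg_p$ and fixes exactly the subspace $T_pZ$. Splitting off this fixed subspace symplectically, the transverse structure is a linear symplectic quotient $\C^{2c}/\overline H$, with $2c=\operatorname{codim}_X Z$ and $\overline H\subset Sp(2c,\C)$ a finite group satisfying $Fix(\overline H)=\{0\}$. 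Since over this slice a symplectic resolution of $Y$ restricts to a symplectic resolution of $\C^{2c}/\overline H$ (recall that for symplectic varieties a resolution is symplectic precisely when it is crepant), the existence of a symplectic resolution of $Y$ forces $\C^{2c}/\overline H$ to admit one as well.

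The key input is then the linear criterion of Verbitsky and Kaledin: a symplectic resolution is \emph{semismall}, and consequently a linear symplectic quotient $\C^{2c}/\overline H$ with isolated fixed point admits a symplectic resolution only if $\overline H$ is generated by \emph{symplectic reflections}, i.e.\ elements whose fixed subspace has codimension $2$. I expect this step to be the main obstacle, since it is exactly the deep content borrowed from the cited papers; concretely, semismallness bounds the dimension of a fibre over a codimension-$2c$ stratum by $c$, whereas an exceptional divisor over such a stratum has fibres of dimension $2c-1$, so a divisorial contribution can occur only when $c=1$. Granting this, suppose $Z$ has codimension $2c\geq 4$. Then $\overline H$, and hence $H$, contains a symplectic reflection $h\neq 1$, whose fixed locus $Fix(h)\subset X$ has codimension $2$. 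Since $h\in H=G_p$ we have $Z=Fix(H)\subset Fix(h)$ near $p$; as $Z$ is an irreducible component of $F$ contained in the codimension-$2$ subvariety $Fix(h)\subset F$, it must coincide with a component of $Fix(h)$, forcing $\operatorname{codim}_X Z=2$, a contradiction. Hence every component of $F$ has codimension exactly $2$.

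Finally, the last assertion would follow at once. If $Y$ has an isolated symplectic singularity of dimension $2n>2$, then at the singular point the isotropy $H$ acts on $\C^{2n}$ with $Fix(H)=\{0\}$ and $2n\geq 4$, so a symplectic resolution would produce, by the argument above, a symplectic reflection $h\in H$ whose fixed locus is a positive-dimensional component of $F$ through $p$; its image is a positive-dimensional stratum of $\operatorname{Sing}(Y)$, contradicting that the singularity is isolated. Thus no symplectic resolution exists unless $\dim Y=2$.
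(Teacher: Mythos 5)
The paper does not actually prove Proposition \ref{semism}: it is imported verbatim from \cite{VerbitskyAsian} and \cite{KaledinSelecta} (via the survey \cite{FuSurvey}), so there is no internal argument to compare yours against. The skeleton of your reduction is the standard one and is sound as far as it goes: the eigenvalue pairing shows each $Fix(g)$ is a symplectic, hence even-codimensional, submanifold, which excludes codimension $1$; and if a component $Z$ of $F$ had codimension $2c\ge 4$, a symplectic reflection $h$ in the generic isotropy $H$ along $Z$ would produce a codimension-$2$ component of $F$ containing $Z$, contradicting the maximality of $Z$ among irreducible subsets of $F$. The last assertion about isolated singularities also follows correctly.

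However, the two steps you flag as borrowed are exactly where all the content of the cited theorems sits, and your parenthetical justifications of them are not complete. First, the claim that a symplectic resolution of $Y$ near $\pi_G(p)$ restricts to one of the transverse quotient $\C^{2c}/\overline{H}$ is not automatic: the local model is the product $T_pZ\times(\C^{2c}/\overline{H})$, but the given resolution need not respect this product structure, and extracting a symplectic resolution of the second factor requires Kaledin's local structure results for symplectic resolutions along strata. Second, semismallness by itself only bounds fibre dimensions and shows that exceptional divisors must dominate codimension-$2$ strata; deducing that $\overline{H}$ is \emph{generated} by symplectic reflections (in particular that $\overline{H}\ne 1$ forces the existence of one) needs an additional covering-space or McKay-type argument --- this is precisely the second proposition quoted in the paper, likewise taken from the literature. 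Read as a derivation of Proposition \ref{semism} from that linear criterion, your argument is essentially correct; read as a self-contained proof it has genuine gaps at these two points --- but the paper itself offers no proof at all, so your write-up should present these inputs explicitly as citations rather than as consequences of semismallness.
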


\begin{proposition} Let $V = \C^{2n}$ and $G$ a finite group of
symplectic automorphisms on $V$, i.e. $\rho_\C : G \ra Sp(\C^{2n})$. Assume that $Y = V/G$ admits a
symplectic resolution, then $G$ is generated by symplectic reflections, i.e. elements $g$ 
such that $codim(Fix(g)) =2$.
In the special case in which $\rho_\C= \eta_\C \oplus \eta_\C^*: G \ra Sp(\C^n \oplus {\C^n}^*)$ is a sum of a complex representation $\eta_\C$ and its dual,  then $V \oplus V^*/G$
has a symplectic resolution if $\eta_\C: G \ra GL(\C^n)$ is generated by complex reflections (i.e. elements $g$ 
such that $codim(Fix(g)) =1)$).
\end{proposition}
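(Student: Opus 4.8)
The statement has two parts, and I would treat the necessary condition and the sufficient condition separately. For the necessary condition I would take $X = V = \C^{2n}$ itself as the smooth symplectic manifold in Proposition \ref{semism}: since $G \subset Sp(V)$ preserves the standard symplectic form and $Y = V/G$ is assumed to admit a symplectic resolution, Proposition \ref{semism} gives that $F = \bigcup_{g \neq 1} Fix(g)$ is empty (when $G = 1$) or pure of codimension $2$. The plan is to combine this purity with the Luna slice theorem: \'etale-locally over the image of a point $p$ the quotient $V/G$ splits symplectically as $Fix(G_p) \times (N_p/G_p)$, where $N_p$ is the symplectic normal slice and $N_p^{G_p} = \{0\}$. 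A symplectic resolution of $V/G$ restricts to one of each slice singularity $N_p/G_p$; at a generic point of a codimension-$2$ component of $F$ the slice is $2$-dimensional, so $G_p$ is a finite subgroup of $Sp(\C^2) = SL(2,\C)$ all of whose nontrivial elements are symplectic reflections of $V$. This identifies the codimension-$2$ strata with mirrors of symplectic reflections.

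To finish the necessary condition, let $N \triangleleft G$ be the (normal) subgroup generated by all symplectic reflections and suppose $N \neq G$. I would pass to $\bar G = G/N$ acting on $V/N$, with $V/G = (V/N)/\bar G$. The point is that every element of $G$ with a codimension-$2$ fixed locus is by definition a symplectic reflection and so already lies in $N$; hence any lift of a nontrivial $\bar g \in \bar G$ has even fixed codimension $\neq 2$, i.e.\ $\geq 4$, and one expects $\bar g$ to produce, in a transverse slice, an isolated symplectic singularity of dimension $> 2$, which by the final clause of Proposition \ref{semism} admits no symplectic resolution, contradicting the resolvability of $V/G$. The hard part here is exactly this last step: $V/N$ is itself singular, so making the slice argument for $\bar G$ rigorous requires controlling the singularities of $V/N$ and lifting the $\bar G$-action to a partial resolution. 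This is the substance of Verbitsky's theorem, and I regard it as the principal obstacle of the first part. (I note in passing that purity of $F$ is strictly weaker than reflection-generation — for instance $G \iso \Z_2 \times \Z_2 \times \Z_3$ acting diagonally on $\C^4$ has $F$ pure of codimension $2$ yet is not reflection-generated — so Proposition \ref{semism} must be used through its slice consequences, not through $F$ directly.)

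For the special case I would first record that, since $\eta_\C : G \ra GL(\C^n)$ is generated by complex reflections, the Chevalley--Shephard--Todd theorem gives $W := \C^n/G \iso \C^n$ smooth, and each complex reflection of $\eta$ (fixing a hyperplane in $\C^n$) fixes a codimension-$2$ subspace of $V \oplus V^*$, hence is a symplectic reflection there, consistent with the necessary condition just proved. The plan is to realize the symplectic resolution of $(V \oplus V^*)/G = T^*V/G$ as the Hilbert--Chow morphism of the equivariant Hilbert scheme $Hilb^G(V \oplus V^*) \ra (V \oplus V^*)/G$, which is proper and birational and carries the Poisson structure descended from $T^*V$. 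Equivalently, I would build it fiberwise over the smooth base $W$ along the projection $(V \oplus V^*)/G \ra V/G = W$: over the codimension-$2$ stratum (the image of a reflection mirror) the stabilizer is the cyclic group generated by that complex reflection, the transverse singularity in the $V^*$-slice is a type-$A$ Du Val singularity, and I would resolve it by its minimal resolution and glue the pieces using the locally-product structure of assumption (a) together with Lemma \ref{solvableresolution}.

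The main obstacle of the special case is to prove that this candidate is smooth and crepant, i.e.\ that the Hilbert--Chow morphism is a genuine symplectic resolution. Smoothness is equivalent to the fibers of $Hilb^G(V \oplus V^*)$ having the expected (semismall) dimension over every stratum, and this is precisely where the complex-reflection hypothesis on $\eta$ must be used in full; crepancy then follows from the comparison of canonical classes supplied by the McKay correspondence of assumption (b). For the groups that actually occur in this paper — the symmetric group $S_n$ on its standard representation and the wreath products $G_{n,m} = \Z_m^n \rtimes S_n$ — the candidate is the Hilbert scheme of $n$ points on the minimal resolution of $\C^2/\Z_m$ (a Nakajima quiver variety), whose smoothness and holomorphic symplecticity are known, so I would reduce the claim to this model and verify crepancy there. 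Establishing smoothness uniformly, i.e.\ that the fiberwise surface resolutions glue to a smooth symplectic space over the deepest strata, is the delicate heart of the construction and the step I expect to be the hardest.
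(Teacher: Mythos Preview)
The paper does not give a proof of this proposition: the sentence immediately preceding it attributes both propositions of this subsection to \cite{VerbitskyAsian} and \cite{KaledinSelecta}, with a pointer to the survey \cite{FuSurvey}. There is therefore nothing in the paper to compare your argument against.

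That said, your treatment of the second clause is aimed at a false target. The paper explicitly introduces these two propositions as \emph{necessary} conditions, so the printed ``if'' in the second sentence is evidently a slip for ``only if''. Read literally as a sufficient condition it is simply false: Proposition~\ref{BLS}, stated immediately afterwards, says that among irreducible complex reflection groups only $S_{n+1}$, $G_{n,m}$ and the binary tetrahedral group yield a symplectic resolution of $(V\oplus V^*)/G$. Any other complex reflection group --- the Weyl group of type $D_4$, or almost any exceptional Shephard--Todd group --- satisfies your hypothesis while the doubled quotient admits no symplectic resolution. In those cases your $G$-Hilbert scheme candidate is singular and the fiberwise Du Val picture collapses over the deepest strata; the ``hardest step'' you flag is not merely hard but impossible in general. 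The intended second clause is just the specialization of the first: since $codim_{V\oplus V^*}Fix(g)=2\cdot codim_{V}Fix(g)$, a symplectic reflection for $\eta\oplus\eta^*$ is exactly a complex reflection for $\eta$, and nothing further needs to be constructed.

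Your sketch for the first clause, by contrast, follows the actual line of Kaledin's argument in \cite{KaledinSelecta}: purity of $F$ from Verbitsky, identification of the codimension-$2$ strata with symplectic-reflection mirrors via slices, and then the reduction modulo the reflection subgroup $N$. You correctly identify that making this last step rigorous on the singular space $V/N$ is where the substantial work lies.
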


Recently the following necessary and sufficient condition has been proved in  \cite{GiKa} and \cite{Bell}; see also \cite{Lehn-Sorger}.
\begin{proposition} 
\label{BLS}
Let $G$ be a finite group, $\rho_\C : G \ra GL(\C^n)$ an irreducible complex representation and assume that 
$\rho_\C(G)$ is  generated by complex reflections. Then $V \oplus V^*/G$
has a symplectic resolution if and only if $(G, \rho_\C)$ is one of the following:

1) $S_{n+1}$ and $\rho_\C$ is the standard representation.

2)  $G_{n,m} = \Z_m^n\rtimes S_n$  and $\rho_\C$ is the natural representation described in the previous section.

3)  $Q_8 \rtimes \Z_3$, the binary tetrahedral group $T$, $n = 2$ and $\rho_\C$ is the representation $\rho_1$ described in the previous section.

\end{proposition}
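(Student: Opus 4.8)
The hypothesis that $\rho_\C(G)$ be generated by complex reflections is really forced by the necessary conditions quoted above: a symplectic reflection of $\rho_\C\oplus\rho_\C^*$ on $V\oplus V^*$ is exactly the ``doubling'' of a complex reflection of $\rho_\C$ on $V$, so the Kaledin criterion (that $G$ must be generated by symplectic reflections, \cite{KaledinSelecta}) already forces $\rho_\C(G)$ to act on $V$ as a complex reflection group. The plan is then to run through the Shephard--Todd classification of irreducible complex reflection groups --- the infinite family $G(m,p,n)$ together with the finitely many exceptional groups --- and to decide in each case whether the doubled quotient $V\oplus V^*/G$ is resolvable, the assertion being that only the three listed families survive.

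For the \emph{existence} direction I would exhibit the resolutions by hand. When $(G,\rho_\C)$ is $S_{n+1}$ with its standard representation, $\rho_\C$ is self--dual and $V\oplus V^*$ is the barycentre--zero part of the permutation action on $(\C^2)^{n+1}$, so that $(V\oplus V^*)/S_{n+1}$ is the reduced symmetric product and the restriction of the Hilbert--Chow morphism $\mathrm{Hilb}^{n+1}(\C^2)\ra\mathrm{Sym}^{n+1}(\C^2)=(\C^2)^{n+1}/S_{n+1}$ to the fibre over $0\in\C^2$ gives a crepant, hence symplectic, resolution (crepant resolutions of symplectic singularities are symplectic, cf.~\cite{FuSurvey}). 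For $G_{n,m}=\Z_m^n\rtimes S_n$ with its natural representation I identify $V\oplus V^*\iso(\C^2)^n$, with $\Z_m^n$ acting diagonally and $S_n$ by permutations, so that $(V\oplus V^*)/G_{n,m}=\mathrm{Sym}^n(\C^2/\Z_m)$; taking the minimal (du Val) resolution $S\ra\C^2/\Z_m$ of the $A_{m-1}$ singularity, the Hilbert scheme $\mathrm{Hilb}^n(S)$ resolves this symplectically. The binary tetrahedral case is the genuinely sporadic one, and here I would simply invoke the construction of \cite{Lehn-Sorger}.

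The \emph{non-existence} direction is the heart of the matter, and where I expect essentially all the difficulty to lie. The engine is the reduction, due to Ginzburg--Kaledin \cite{GiKa} and Namikawa, to the symplectic reflection algebra $H_c$ attached to $(G,V\oplus V^*)$: the quotient admits a symplectic resolution if and only if its universal Poisson deformation is generically smooth, equivalently if and only if the generic Calogero--Moser space $X_c=\mathrm{Spec}\,Z(H_c)$ is smooth. Smoothness of $X_c$ is in turn a purely representation--theoretic condition on the restricted rational Cherednik algebra: it holds precisely when every baby Verma module is simple (Etingof--Ginzburg, Gordon). This converts the ``only if'' into a finite verification across the Shephard--Todd list. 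Two reductions keep it finite and local: a symplectic reflection group splits as an orthogonal sum of indecomposable ones and resolvability is additive, so I may assume $(G,\rho_\C)$ irreducible; and by Proposition~\ref{semism} the fixed locus has pure codimension $2$, so Kaledin's slice theorem \cite{KaledinInv} forces the transverse slice to each codimension--$2$ symplectic leaf to be itself resolvable, i.e.\ of Kleinian or wreath type.

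With these tools the elimination proceeds by producing, for each disallowed group, either a codimension--$2$ slice whose local symplectic reflection group is not of the permitted type, or a failure of the baby--Verma simplicity criterion for all parameters $c$. In the infinite family this excludes every $G(m,p,n)$ with $p>1$ --- leaving only the wreath products $p=1$ and the degenerate $m=1$ case $S_{n+1}$ --- while among the exceptional complex reflection groups it leaves exactly the binary tetrahedral representation $\rho_1$. The bulk of the work, and the main obstacle, is precisely this exhaustive smoothness check over all the exceptional groups and the $p>1$ members of the infinite family, which is carried out in \cite{GiKa} and \cite{Bell}.
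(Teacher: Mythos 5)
Your proposal takes essentially the same route as the paper: the paper states this proposition as a known result of \cite{GiKa} and \cite{Bell} (see also \cite{Lehn-Sorger}) without proof, and the only argument it supplies is the description of the explicit resolutions in cases 1)--3) --- the null-fibre of the Hilbert--Chow composition $Hilb^{n+1}(\C^2)\ra Sym^{n+1}(\C^2)\ra\C^2$, the Hilbert scheme of the minimal resolution of the $A_{m-1}$ singularity, and the Lehn--Sorger construction --- which are exactly the constructions you give for the ``if'' direction. Your additional sketch of the ``only if'' direction via Calogero--Moser spaces and the baby Verma simplicity criterion is a faithful summary of how the cited papers actually proceed, but, like the paper, you ultimately defer the exhaustive check over the Shephard--Todd list to \cite{GiKa} and \cite{Bell}, so no new gap is introduced or closed relative to the paper's treatment.
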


The first case corresponds to the Weyl groups of type $A$,
namely the Weyl groups of the Lie algebra $\a_n$. The second case, for
$m =2$, corresponds to the Weil  groups of type $B$ and $C$ (see \cite{BourbakiLie}, 
Chapter VI, Tables I, II and III).
Note that in the first case and in the second when $m=2$ the representation is integral.

A local symplectic resolution is obtained in 1) and 2) via 
Hilbert schemes: namely for a smooth surface $S$ the Hilbert scheme $Hilb^n(S)$ provides
a crepant resolution $Hilb^n(S) \ra Sym^n(S)$. 
In 1)  consider the null-fiber of the morphism which is the composition  

$$ Hilb^{n+1}(\C^2) \ra Sym^{n+1}(\C^2)\ra \C^2 $$

where  $s: Sym^{n+1}(\C^2) \ra \C^2$,
is the the summation $\Sigma_{i= 0,..., n} e_i^j$ for $j=1,2$; note that $s^{-1} (0) = \C^{2n}/S_n$. 

In  2) consider first a minimal resolution of the $A_{m-1}$ singularity $\C^2/\Z_m$,
namely ${\widehat {\C^2/\Z_m}} \ra \C^2/\Z_m$, and then the composition 
$$Hilb^n(\widehat{\C^2/\Z_m}) \ra Sym^n(\widehat{\C^2/\Z_m}) \ra Sym^n({\C^2/\Z_m}).$$

An explicit local resolution for the third case has been given recently in \cite{Lehn-Sorger}.

\smallskip
The Kummer construction applied to an abelian surface for the group and the representation in 1) of Proposition \ref{BLS} gives a
generalized Kummer variety $Kum^n(A)$, as constructed by Beauville, see
\cite{BeauvilleJDG} and also \cite{Fujiki}. 
A global resolution  $f: X\ra Y=A^n/S_{n+1}$ is
obtained, as in the local case, considering  the null-fiber of the composition $Hilb^{n+1}(A)\ra
A^{n+1}/S_{n+1} \ra A$. The first map is  the Hilbert-to-Chow map where the action of $S_{n+1}$ on $A^{n+1}$ is by permutation of the  coordinates $(e_0, \dots, e_n)$, the quotient
is interpreted as the Chow variety of $A$; the second is the summation $\Sigma_{i= 0, ...,n} e_i $ .

In the case 2) of Proposition\ref{BLS}, with $m=2$ (the case of integral representation), the Kummer
construction applied to an Abelian surface gives the other series of symplectic manifolds considered by Beauville, see
\cite{BeauvilleJDG} and also  \cite{Fujiki}. 

Namely the group $\Z^n_2$ acts on $A^n$ diagonally so we have
a sequence of quotients
\begin{eq}A^n\lra (A/\Z_2)^n\lra (A/\Z_2)^n/S_n\end{eq}
and since the first quotient can be desingularized as $(Kum^1(A))^n$
then the latter quotient has a natural desingularization as
$Hilb^n(Kum^1(A))$, as follows from lemma \ref{solvableresolution}.

In the last section we will prove that a (generalized) Kummer construction in the case 3) of Proposition \ref{BLS}
cannot have a global crepant resolution.

\section{Computing cohomology}

In this paper we will calculate the De Rham
cohomology, or the Betti numbers $b_i(X)={\rm dim}_\C H^i(X,\C)$, for some
varieties which are the results of Kummer constructions in the low dimensional cases.

This is a two step process which consists of: (1) calculating
cohomology of $Y=A^r/G$ and (2) calculating the contribution coming
from resolution $X\ra Y$.

Recall that one can assign to any complex algebraic variety $X$, not necessarily smooth, or compact, or irreducible, a virtual Poincar\'e polynomial, $P_X(t)$, with the following properties. For a compact manifold $X$ of complex dimension $n$ the polynomial is defined as the standard Poincar\'e polynomial
$$P_X(t)=\sum_{i=0}^{2n} b_i(X)\,t^i\in\Z[t],$$
where $t$ is a formal variable and
$b_i(X)=dimH^i_{DR}(X)$ are the Betti numbers. Moreover if $Y$ is a closed algebraic subset of $X$ and $U:= X \setminus Y$ then 
$$P_X(t) = P_Y(t) + P_U(t).$$
For further details we refer to \cite[4.5]{FultonToric} and \cite[2]{Totaro}.  
We remark that
the virtual Poincar\'e  is actually the standard Poincar\'e polynomial 
also if $X$ is compact and has quotient singularities, see  \cite[p.~94]{FultonToric}.

\subsection{Quotients}\label{cohomology1}
Computing cohomology of the quotient is pretty straightforward: we
look at $H^i(A^r,\C)$ or $H^{pq}(A^r)$ on which $G$ acts via
representation $\left(2d\cdot\rho_\C\right)^{\wedge i}$ or
$(d\cdot\rho_\C)^{\wedge p}\otimes(d\cdot\rho_\C)^{\wedge q}$,
respectively. Note that $\overline\rho_\C=\rho_\C$, because
$\rho$ is real. By looking at the identity component of this
representation one determines the dimension of the space of
$G$-invariant forms which subsequently can be used to get the
information about the cohomology of $Y=A^r/G$, see
e.g.~\cite[Ch.~III]{Bredon}. We note that products of representations
of finite groups can be calculated in a standard way by looking at
their characters, see e.g.~\cite[Part I]{FultonHarris}.

Given an action of a group $G$ on a variety $Z$, in order to formulate the result
 in terms of Poincar\'e polynomial, we define a  $G$-Poincar\'e polynomial, $P_{Z,G}(t)\in R(G)[t]$, 
 whose coefficient at $t^i$ is equal to the representation  
 of the induced $G$-action on the vector space $H^i(Z,\C)$. 
 In particular, in our set-up
\begin{eq}
P_{A^r,G}(t)=\sum_{i=0}^{2rd}(2d\cdot\rho_\C)^{\wedge i}\cdot t^i ;
\label{GPoincare-torus}\end{eq}
we will denote this polynomial by $(1+t)^{2d\rho}$.

\begin{lemma}\label{quotient-polynomial}
For $Y=A^r/\rho_A$ we have $P_Y(t)=\mu_0((1+t)^{2d\rho})$ where $\mu_0:
R(G)[t]\ra \Z[t]$ is the reduction of coefficients via $\mu_0: R(G)\ra\Z$.
\end{lemma}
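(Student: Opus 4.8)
The plan is to compute the Poincaré polynomial of the quotient $Y = A^r/G$ by relating the cohomology of $Y$ to the $G$-invariant part of the cohomology of $A^r$, and then to recognize the invariant-taking operation as exactly the map $\mu_0$ applied to the $G$-Poincaré polynomial. The key input is that quotient singularities are rational (more elementarily, that for a finite group acting on a variety $Z$ one has $H^i(Z/G,\C) \iso H^i(Z,\C)^G$). This identification of the cohomology of the quotient with the invariants is standard — it follows from the transfer/averaging argument over $\C$, as in \cite[Ch.~III]{Bredon} cited just above — and it holds here because $Y$ has at worst quotient singularities, for which the virtual Poincaré polynomial coincides with the honest Poincaré polynomial (as already remarked via \cite[p.~94]{FultonToric}).

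So the first step is to write, for each degree $i$,
$$
b_i(Y) = \dim_\C H^i(Y,\C) = \dim_\C H^i(A^r,\C)^G.
$$
The second step is to interpret the right-hand side representation-theoretically. By definition the coefficient of $t^i$ in $P_{A^r,G}(t)$ is the class in $R(G)$ of the $G$-representation $H^i(A^r,\C)$, which by (\ref{GPoincare-torus}) equals $(2d\cdot\rho_\C)^{\wedge i}$. The dimension of the invariant subspace $H^i(A^r,\C)^G$ is precisely the rank of the maximal trivial subrepresentation of this $G$-module, which is exactly what the additive map $\mu_0: R(G) \ra \Z$ computes. Hence
$$
b_i(Y) = \mu_0\!\left((2d\cdot\rho_\C)^{\wedge i}\right),
$$
and assembling over $i$ and using that $\mu_0: R(G)[t] \ra \Z[t]$ acts coefficientwise gives $P_Y(t) = \mu_0\bigl((1+t)^{2d\rho}\bigr)$, as claimed.

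The only genuine point requiring care — the main potential obstacle — is the first step, the isomorphism $H^i(Y,\C) \iso H^i(A^r,\C)^G$ for the singular quotient. One must be sure one is computing the right cohomology theory: since $Y$ has quotient singularities, its rational cohomology is a direct summand of $H^*(A^r,\C)$, cut out by the averaging projector $e = \tfrac{1}{|G|}\sum_{g\in G}\rho_A(g)^*$, and the image of $e$ in each degree is exactly the invariant subspace. The same averaging argument, together with the fact that for a space with quotient singularities the virtual Poincaré polynomial equals the honest Poincaré polynomial, guarantees that $P_Y(t)$ genuinely records the Betti numbers $b_i(Y)$ rather than some virtual correction term; this is precisely the remark made before \ref{cohomology1}, so no further input is needed. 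Everything else is bookkeeping: $\mu_0$ is additive and is applied coefficientwise, so it commutes with forming the polynomial, and the notation $(1+t)^{2d\rho}$ was set up in (\ref{GPoincare-torus}) precisely so that this reduction reads off cleanly.
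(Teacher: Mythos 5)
Your proposal is correct and matches the paper's own (implicit) argument: the paper proves this lemma via the discussion immediately preceding it, namely that $G$ acts on $H^i(A^r,\C)$ by $(2d\cdot\rho_\C)^{\wedge i}$, that $H^i(A^r/G,\C)=H^i(A^r,\C)^G$ by the averaging argument of \cite[Ch.~III]{Bredon}, and that the virtual Poincar\'e polynomial agrees with the honest one for compact varieties with quotient singularities. Your extra care about the projector $e=\tfrac{1}{|G|}\sum_g \rho_A(g)^*$ only makes explicit what the paper cites.
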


\subsection{Resolution.}
For understanding the cohomology of the resolution $X\ra Y$ we will
write the quotient $Y=A^r/G$ as a disjoint sum of locally closed sets
(strata) $Y([H])$ consisting of orbits of points whose isotropy is in
the conjugacy class of a subgroup $H<G$. The calculation is somehow in
the spirit of \cite{Hirzebruch}. Over $Y([H])$ the singularities of
$Y$ will be locally quotients of $\C^{rd}$ by action of $H$. Thus, by
taking inverse images of sets $Y([H])$ we will produce a decomposition
of $X$ into a disjoint sum of locally closed sets $X([H])$ such that the
restriction $X([H])\ra Y([H])$ will be a locally trivial fiber bundle
with a fiber $F([H])$ depending on the resolution of the $H$-quotient
singularity. Now the cohomology of $X$ will be computed by looking at
each of $X([H])$ and using virtual Poincar\'e polynomial for each of
them.

The set $Y([H])$ may be disconnected, as for instance in the case of a Kummer
surface construction. However, as already noted, for every $y\in
Y([H])$ the singularity of $Y$ in a neighborhood of $y$ is of type
$\C^{rd}/d\rho_\C(H)$. That is, given $p\in A^r$ in the orbit
represented by $y\in Y([H])$ with isotropy $G_p=H$, there is a map
$T_pA^r/\rho_\a(H)=\a^r/\rho_\a(H)\ra Y$ which is an isomorphism of 
analytic neighborhoods of the $0$ orbit and of $y$.  Indeed, consider
evaluation of vector fields at $p$, that is $exp_p:\a^r=T_pA^r\ra
A^r$, where $\exp_p(0)=p$; it is $H$-equivariant and thus it defines
a map of quotients $T_pA^r/\rho_\a(H)\ra A^r/\rho_A(H)$. Compose
it with the natural map of spaces of orbits $\pi_{G/H}: A^r/H\ra
A^r/G$, coming from the inclusion $H<G$. The first of these morphism
is an isomorphisms of analytic neighborhoods of $0$ and $p$. On the
other hand, we can choose an analytic open neighborhood $U$ of $p$
such that $gU=U$ for $g\in H$ and $gU\cap U=\emptyset$ for $g\not\in
H$. Thus the orbits of $G$ and $H$ restricted to $U$ coincide, that is
$\pi_{G|U}=\pi_{H|U}$, for the respective orbit class maps. Hence
$\pi_{G/H}$ is bijective and, by normality of quotients, an
isomorphism over the respective neighborhoods of $\pi_H(p)$ and $y$.

By these arguments, there exists an open analytic neighborhood
$V$ of $\pi_H((A^r)^H_0)$ such $\pi_{G/H}$ restricted to $V$ is a
local isomorhism onto an open neighborhood of $Y([H])$.

Now let $r_0=\mu_0(\rho_{|H})$ be the rank of the maximal trivial
subrepresentation of $(\rho_\C)_{|H}$ so that, as in formula
\ref{exact-sequence}, ${(\rho_\Z)}_{| H}$ is a pull-back of $\eta_H:
H\ra SL(r_H,\Z)$ of rank $r_H$, with $r_H+r_0=r$, and $\eta$ has no
non-trivial fixed point. Accordingly, after extending to $A$, we have
a $H$-equivariant sequence of abelian varieties \begin{eq}
A^{r_0}\hookrightarrow A^r \longrightarrow A^{r_H}\end{eq} where the
action of $H$ on $A^{r_H}$ has only a finite number of fixed
points. Thus $(A^r)^H$ is a union of (affine) abelian subvarieties of
$A^r$ of dimension $dr_0$.

We fix a resolution of the quotient singularity $\C^{dr_H}/d\eta_\C$
with the special (central) fiber $F(H)$. Then it determines a product
resolution of a neighborhood of a component of $(A^r)^H$ in
$A^r/H$. Indeed, as follows from the preceding discussion, any such
component has a neighborhood isomorphic to a neighborhood of
$A^{r_0}\times\{0\}$ in $A^{r_0}\times \C^{dr_H}/d\eta_\C$, hence it
admits a product resolution. We will consider resolutions $X\ra Y$
which are locally product in the following sense.

\begin{definition}\label{locally-product-resolution}
Let $f: X\ra Y=A^r/G$ be a resolution of singularities. We say that it
is {\sl a locally product} if for every $H<G$ and every irreducible component
$K$ of $Y([H])$ there exists an open analytic neighborhood $U\subset
Y$ of $K$ such that the pull-back via $\pi_{G/H}$ of the resolution
$f^{-1}(U)\ra U$ over an open subset of $A^r/H$ is analytically
equivalent to a product resolution with the special fiber $F([H])$
depending only on the conjugacy class of $H$.
\end{definition}

The next result provides a description of both $Y([H])$ and $X([H])$
in terms of $(A^r)^H_0$ and the group $W(H)$. 

\begin{lemma}\label{structure-of-strata}
Let $f: X\ra Y=A^r/G$ be a locally product resolution of
singularities. Then $f_{|X([H])}: X([H])\ra Y([H])$ is an \'etale fiber
bundle whose fiber $F([H])$ is isomorphic to the special fiber of a
resolution of the quotient singularity $\C^{dr_H}/H$. Moreover, let
$\overline{Y([H])}\subset Y$ denote the closure of $Y([H])$ in
$Y$ and $\widehat{Y([H])}\ra \overline{Y([H])}$ be its
normalization.  Then the following holds
\begin{itemize}
\item The action of $N(H)$ determines an action of $W(H)$ on
$\overline{(A^r)^H_0}$  and the morphism $\overline{(A^r)^H_0}\ra\widehat{Y([H])}$ is the
quotient by $W(H)$.
\item The action of $W([H])$ on $\overline{(A^r)^H_0}$ lifts to the
product $\overline{(A^r)^H_0}\times F([H])$
in such a way that there is a commutative diagram
$$\xymatrix{
\overline{(A^r)^H_0}\times F([H])\ar[r]\ar[d]&
\left(\overline{(A^r)^H_0}\times F([H])\right)/W([H])\ar[d]&
X([H])\ar[l]\ar[d]\\
\overline{(A^r)^H_0}\ar[r]&\widehat{Y([H])}&Y([H])\ar[l]
}$$
where the horizontal arrows on the left hand side are quotient maps
while these on the right hand side are inclusions onto open subsets
\end{itemize}
\end{lemma}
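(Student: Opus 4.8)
The plan is to describe the base stratum $Y([H])$ first and then transport the description up to the resolution. First I would identify $Y([H])$ with a quotient of $(A^r)^H_0$. Every orbit in $Y([H])$ meets $(A^r)^H_0$, since if $G_q=gHg^{-1}$ then $g^{-1}q\in (A^r)^H_0$ represents the same orbit; and two points $p,p'\in (A^r)^H_0$ satisfy $\pi_G(p)=\pi_G(p')$ exactly when $p'=np$ with $n\in N(H)$. As $H$ fixes $(A^r)^H$ pointwise, the fibers of $(A^r)^H_0\ra Y([H])$ are the $W(H)$-orbits, and this $W(H)$-action is free by the remarks in Section \ref{trivialities}; hence $Y([H])=(A^r)^H_0/W(H)$. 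Passing to closures, $\overline{(A^r)^H_0}$ is a union of abelian subvarieties coming from the splitting $A^{r_0}\hookrightarrow A^r\ra A^{r_H}$, so it is normal and $N(H)$-invariant; the induced morphism $\overline{(A^r)^H_0}/W(H)\ra\overline{Y([H])}$ is finite and birational with normal source, hence is the normalization. This yields the left-hand square of the diagram and realizes $Y([H])$ as the open locus of $\widehat{Y([H])}$ where the isotropy is exactly $H$.

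Next I would build the fiber bundle structure of $f_{|X([H])}$. By the local computation in Section \ref{trivialities}, a neighborhood of a component of $(A^r)^H$ in $A^r/H$ is analytically a product $A^{r_0}\times\C^{dr_H}/d\eta_\C$, and $\pi_{G/H}$ is a local isomorphism from a neighborhood of $\pi_H((A^r)^H_0)$ onto a neighborhood of $Y([H])$. Definition \ref{locally-product-resolution} says precisely that, pulled back to $A^r/H$, the resolution over such neighborhoods is analytically a product with special fiber $F([H])$ depending only on $[H]$. Trivializing over the \'etale cover $(A^r)^H_0\ra Y([H])$ then exhibits $f_{|X([H])}\colon X([H])\ra Y([H])$ as an \'etale fiber bundle with fiber $F([H])$, the special fiber of a resolution of $\C^{dr_H}/H$.

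Finally I would assemble the right-hand square by lifting the $W(H)$-action to the product. An element $n\in N(H)$ preserves $(A^r)^H$ and normalizes $H$, and near a fixed point its derivative $\rho_\a(n)$ respects the splitting and induces an automorphism of the local model $\C^{dr_H}/d\eta_\C$ intertwining $d\eta_\C$ with its $n$-conjugate. Since $F([H])$ is canonically attached to $[H]$, this automorphism lifts to $F([H])$, trivially for $n\in H$, producing a $W(H)$-action on the fiber; combined with the base action it gives a diagonal $W([H])$-action on $\overline{(A^r)^H_0}\times F([H])$. Taking quotients then matches the two \'etale-local product descriptions, over $A^r/H$ and over $Y$, and identifies $X([H])$ with the open subset of $(\overline{(A^r)^H_0}\times F([H]))/W([H])$ lying over $Y([H])$, completing the diagram.

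The \emph{main obstacle} is exactly this last step: constructing the $W([H])$-action on $F([H])$ and checking that the local product resolutions glue $W([H])$-equivariantly over the possibly disconnected strata, whose components $N(H)$ may permute. This coherence is what assumption (a), the locally product property, is designed to guarantee, since it forces $F([H])$ to be independent of the chosen point and component so that the normalizer action on the base can be propagated unambiguously to the fiber; the McKay assumption (b) underlies the canonical description of $F([H])$ itself.
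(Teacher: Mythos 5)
Your argument is correct and follows essentially the same route as the paper's proof: identify the preimage of $Y([H])$ with the union of the $(A^r)^{H'}_0$, use the free $W(H)$-action on $(A^r)^H_0$ to realize $\widehat{Y([H])}$ as the quotient $\overline{(A^r)^H_0}/W(H)$, and then invoke the locally product hypothesis for the fiber-bundle structure and the lifted $W([H])$-action. The only difference is one of detail: the paper disposes of the last step with a single sentence (``the rest follows because of our assumption regarding the resolution''), whereas you correctly spell out why the normalizer action propagates to $F([H])$ and why this is exactly what Definition \ref{locally-product-resolution} is designed to guarantee.
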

\begin{proof}
Take the quotient map $A^r\ra Y=A^r/G\supset Y([H])$ and consider
inverse image of $Y([H])$.  The inverse image decomposes into disjoint
sets $(A^r)^{H'}_0$, depending on the isotropy class $H'\in[H]$. The
normalizer $N(H)$ acts on the set of points whose isotropy is $H$,
i.e.~$(A^r)^H_0$, and this determines a free action of $W(H)$ 
on this set of points. Take factorization of the quotient map
$\pi_G$ into $A^r\ra A^r/W(H)\ra A^r/G$ which gives a regular birational
map $(A^r)^H_0/W(H)\ra \overline{Y([H])}$.  This proves the central
statements of the lemma. The rest follows because of our assumption
regarding the resolution.
\end{proof}

We note that the map $(A^r)^H_0\times F([H])\ra X([H])$ usually does
not extend, so that $\overline{(A^r)^H_0}\times F([H])/W([H])$
is not the normalization of the closure of $X([H])$. 

We will use the preceding lemma to calculate the Poincar\'e polynomial of
both $Y([H])$ and $X([H])$. In fact, the following is how one computes
the cohomology of $\widehat{Y([H])}$ and $\overline{(A^r)^H_0}\times
F([H])/W([H])$. Let $K\subset Y([H])$ be an irreducible
component whose normalized closure we denote by $\widehat{K}$. Then
$\widehat{K}\iso A_K/W_K$ where $W_K<W(H)$ is the subgroup which
preserves $A_K\iso A^{r_0}$, a component of the closure of $(A^r)^H_0$
which dominates $K$. Thus, as in \ref{GPoincare-torus}, we can write
\begin{eq}
P_{A_K,W_K}(t)=\sum_{i=0}^{2dr_0}(2d\cdot\eta_K)^{\wedge
i}\cdot t^i= (1+t)^{2d\eta_K}
\label{GPoincare-Y}
\end{eq}
where $\eta_K: W_K\ra GL(r_K,\C)$ is a representation of $W_K$ induced
from $\rho_\C$. That is, the group $N(H)$, and thus $W(H)$, acts on
the fixed point space of $(\rho_\C)_{|H}$, as in \ref{exact-sequence},
hence it yields an action of $W_K<W(H)$. Therefore, as in
\ref{quotient-polynomial}, we get
$P_{\widehat{K}}=\mu_0(P_{A_K,W_K})$.

\bigskip Now, recall that the McKay correspondence postulates a
canonical relation of conjugacy classes of elements in a group $H$
with cohomology or homology of a crepant resolution of its quotient
singularity, see e.g.~\cite{ReidBourbaki}, 
\cite{BertinMarkushevich},  \cite{KaledinInv} and \cite{GiKa}. Thus, if the McKay correspondence holds for
the fixed resolution of $\C^{r_Hd}/H$ then we can use it to understand
the action of $W(H)$ on cohomology of $\overline{(A^r)_0^H}\times
F(H)$. Indeed, the group $W(H)$ acts on the cohomology of
$F(H)$ as $W(H)$ acts on the conjugacy classes of $H$. So, in the
situation introduced in the previous paragraph, the $W_K$-Poincar\'e
polynomial $P_{F(H),W_K}$ is determined by the adjoint action of $W_K$
on conjugacy classes of elements in $H$, which is $w([h]_H)\mapsto
[whw^{-1}]_H$, where $w\in N(H)$ represents an element of
$W(H)$ and $h\in H$. Thus, whenever the representation
$\rho$ is fixed, we will simply write $P_{H,W_K}$ instead of
$P_{F(H),W_K}$. We conclude
\begin{eq}
P_{(A_K\times F(H)), W_K}=(1+t)^{2d\eta_K}\cdot P_{H,W_K}
\label{PoincareWF/G}
\end{eq} 
However, deriving from it the virtual Poincar\'e polynomial for $X([H])$
requires understanding lower dimensional strata, that is the quotient
$(A_K\times F(H))/W_K$ over the difference $\overline{Y([H])}\setminus
Y([H])$. To this end, let $H'> H$ be a subgroup and $K'\subset
\overline{K}$ an irreducible component of $Y([H'])$. By $W_{K;K'}<W_K$
we denote the subgroup of $W_K$ which preserves $K'$. Then, by
restricting the representations we get $P_{H,W_{K;K'}}$ which is the
$W_{K;K'}$-polynomial describing the action of $W_{K;K'}$ of the
cohomology of the fiber $F(H)$. On the other hand we have an induced
representation $\eta_{K;K'}: W_{K;K'}\ra GL(r_{K'},\C)$ and therefore
the action of $W_{K;K'}$ on the cohomology of the torus $A_{K'}\iso
A^{r_{K'}}$ dominating $K'$ is described by the polynomial 
$(1+t)^{2d\eta_{K;K'}}$, thus
\begin{eq}
P_{(A_{K'}\times F(H)), W_{K;K'}}=(1+t)^{2d\eta_{K;K'}}\cdot P_{H,W_{K;K'}}
\end{eq}

The following statement is a summary of the preceding discussion.

\begin{principle}
Let $X$ be obtained via a Kummer construction as described in the introduction; i.e. $X$ is a crepant resolution
of $A^r/G$ satisfying  a) and b) in the introduction. The Poincar\'e polynomial $P_X\in\Z[t]$ is
described by the following formula
$$
\sum_{[H]\in\cC(G)}\sum_{K\subset Y([H])}
\left[\begin{array}{l}
\mu_0\left(  (1+t)^{2d\eta_K}\cdot P_{H,W_K}\right) - \\ 
\sum_{[H']\succ[H]}
\sum_{K'\subset\overline{K}\cap Y([H'])}
a_{K;K'}\cdot\mu_0\left((1+t)^{2d\eta_{K;K'}}\cdot P_{H,W_{K;K'}}\right)
\end{array}
\right]
$$
where $K$ runs through all irreducible components of $Y([H])$ and $K'$
through all irreducible components of $\overline{K}\cap Y([H'])$ and
$a_{K;K'}$ are numbers which depend on the incidence of the closures
of components of the stratification of $Y$.
\label{principle}
\end{principle}

The data which appears in the above formula is of two types: (1)
depending on the group $G$ and its complex representation
$\rho_\C$ and (2) depending on the integral conjugacy class of the
representation $\rho_\Z$ which determines the geometry of the quotient
$Y$ and its stratification. 
As it is shown in \cite{Donten},  already in dimension three Kummer constructions with the same $\rho_\C$ can have different $\rho_\Z$ and different cohomology.

\section{Building upon elliptic curves: $d=1$}
\label{elliptic}

In the present section $A$ denotes an elliptic curve.  Quotients of
products of elliptic curves by actions of specific groups have been
considered by several people: \cite{Paranjape}, \cite{CynkHulek},
\cite{CynkSchuett}.

\subsection{Special Kummer surfaces: $r=2$}

Let us start with the following easy classical case.
The classification of rank 2 groups whose action give Gorenstein
singularities, known as Du Val singularities, is very well understood,
these are finite subgroups of $SL(2,\C)$, \cite{Durfee15}. On the
other hand there are only 4 types of nontrivial subgroups of
$SL(2,\Z)$, all are cyclic and generated, up to conjugation in
$GL(2,\Z)$, by one of the following matrices
(cf.~\cite[Ch.~IX]{Newman}).
\begin{eq}\begin{array}{cccc}
\left(\begin{array}{rr}-1&0\\0&-1\end{array}\right)&
\left(\begin{array}{rr}0&-1\\1&-1\end{array}\right)&
\left(\begin{array}{rr}0&-1\\1&0\end{array}\right)&
\left(\begin{array}{rr}0&-1\\1&1\end{array}\right)
\end{array}\end{eq}
They generate cyclic groups $\Z_2$, $\Z_3$, $\Z_4$ and $\Z_6$. 

Let us discuss the case of $\rho_\Z: \Z_6\ra SL(2,\Z)$.  In the
following table we list its subgroups, each of them generated by an
element $g$. For each of them we give the number of its fixed points
and the number of singular points whose isotropy is exactly the group
in question. The latter number is obtained by subtracting those fixed
points whose isotropy is bigger and dividing by the cardinality of the
respective orbit of $G$, which is the index of the subgroup in
question. In the last two columns we present the Dynkin diagram of the
special fiber of the minimal resolution of the respective singular
point and its virtual Poincar\'e polynomial.

\begin{tabular}{ccclc}
$g$&$\#$ fix pts& $\#$ sing pts &resolution&Poincar\'e\\
$\left(\begin{array}{rr}0&-1\\ 1&1\end{array}\right)$&1&1&
$\begin{xy} <8pt,0pt>:
(1,0)*={\bullet}="1" ; (2,0)*={\bullet}="2"   **@{-},
"2" ; (3,0)*={\bullet}="3"   **@{-},
"3" ; (4,0)*={\bullet}="4"   **@{-},
"4" ; (5,0)*={\bullet}   **@{-},
\end{xy}$&$1+5t$ \\
$\left(\begin{array}{rr}0&-1\\ 1&-1\end{array}\right)$&9&4&
$\begin{xy} <8pt,0pt>:
(0,0)*={\bullet} ; (1,0)*={\bullet}="1" **@{-},
\end{xy}$&$1+2t$\\
$\left(\begin{array}{rr}-1&0\\
0&-1\end{array}\right)$&16&5&$\bullet$&$1+t$\\
\end{tabular}

On the other hand we note that over the complex number, that is in
$SL(2,\C)$, the representation $\rho$ (or, equivalently, the matrix
generating the image of $\rho$) is equal to diagonal representation
$\epsilon_6+\epsilon_6^5$, where $\epsilon_6$ denotes sixth
primitive root of unity. Thus we compute $\rho\otimes\rho=2\cdot{\bf
1}+ \epsilon_6^2+\epsilon_6^4$ hence the space of invariant $(1,1)$
forms is of dimension 2.  We add to it the contribution of cohomology
coming from resolving singular points of the quotient, as listed
above, to get
$$2 + 1\times 5 + 4\times 2 + 5\times 1= 20$$ which is the dimension
of $H^{11}$ for a K3 surface.

\subsection{Kummer threefolds: $r=3$}
\label{Kummer}
Construction of Calabi-Yau threefolds via quotients have been
considered in e.g.~\cite{CynkHulek}, \cite{CynkSchuett} as well as in
\cite{Oguiso}. We note, however, that the last reference concerns
dividing abelian varieties by an action of translations so that the
quotient map is \'etale.  As for the linear action, which is used in
the present paper, there is a classical book \cite[Ch.~IX]{Newman} which
provides a list of isomorphisms classes of finite subgroups of
$SL(3,\Z)$. Donten, \cite{Donten} classifies noncyclic finite
subgroups of $SL(3,\Z)$ up to conjugacy in $GL(3,\Z)$. The following
proposition summarizes these results.

\begin{proposition}\label{class-of-subgs}
The following are, up to isomorphism, (non-trivial) finite subgroups
of $SL(3,\Z)$:
\begin{itemize}
\item cyclic groups $\Z_a$, of rank $a$, for $a=2,\ 3,\ 4$ and 6,
\item dihedral groups $D_{2a}$, of rank $2a$, for $a=2,\ 3,\ 4$ and 6,
  which have, respectively, $4,\ 3,\ 2$ and 1 conjugacy classes in
  $GL(3,\Z)$
\item the alternating group $A_4$ which has 3 conjugacy classes in
  $GL(3,\Z)$ (e.g.~the tetrahedral group of isometries of
  the tetrahedron),
\item the symmetric group $S_4$ which has 3 conjugacy classes in
  $GL(3,\Z)$ (e.g.~octahedral group of isometries of a cube)
\end{itemize}
\end{proposition}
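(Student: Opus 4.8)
The plan is to establish the isomorphism classification by reducing to finite subgroups of $SO(3)$ subject to the crystallographic restriction, and then to obtain the finer count of $GL(3,\Z)$-conjugacy classes from the integral representation theory cited in \cite[Ch.~IX]{Newman} and \cite{Donten}.

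First I would pass from $\Z$ to $\R$. Given a finite subgroup $G<SL(3,\Z)$, averaging the standard inner product over $G$ yields a $G$-invariant positive definite symmetric form, so $G$ is conjugate inside $GL(3,\R)$ to a subgroup of $O(3)$; since $\det g=1$ for every $g\in G$, in fact $G<SO(3)$. Thus, up to abstract isomorphism, $G$ is one of the finite subgroups of $SO(3)$: a cyclic group $\Z_n$, a dihedral group $D_{2n}$, or one of the polyhedral groups $A_4$, $S_4$, $A_5$. Second, I would impose integrality via the crystallographic restriction. Any $g\in GL(3,\Z)$ of finite order acts on $\Z^3$, so its trace is an integer; regarding $g$ as a rotation of $\R^3$ through an angle $\theta$, its trace equals $1+2\cos\theta$, whence $2\cos\theta\in\Z$ and $\cos\theta\in\{0,\pm\tfrac12,\pm1\}$. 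Consequently every finite-order element has order in $\{1,2,3,4,6\}$. This excludes $A_5$, which contains rotations of order $5$, and forces $n\in\{2,3,4,6\}$ for the cyclic and dihedral types (note that $D_4$, the case $a=2$, is the Klein four-group). The surviving abstract types are precisely $\Z_a$ and $D_{2a}$ for $a\in\{2,3,4,6\}$ together with $A_4$ and $S_4$, matching the asserted list.

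For existence one exhibits integral realizations: the subgroup of determinant-one signed permutation matrices acting on the cubic lattice $\Z^3$ is isomorphic to $S_4$ and contains $A_4$ as its rotation subgroup, while all the required cyclic and dihedral groups occur as subgroups of it (or arise on other rank-$3$ lattices, exactly as in the rank-$2$ case treated above). This disposes of the classification up to isomorphism.

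The genuinely delicate point, and the main obstacle, is the count of $GL(3,\Z)$-conjugacy classes attached to each isomorphism type. Conjugacy in $GL(3,\Z)$ is strictly finer than abstract isomorphism, and even than conjugacy in $GL(3,\R)$: two integrally inequivalent lattices carrying the same abstract $G$-action give non-conjugate subgroups, a distinction invisible to the real or rational representation. Enumerating these classes amounts to classifying the distinct $G$-stable rank-$3$ lattices up to integral equivalence, which is exactly the content of Newman's tables in \cite[Ch.~IX]{Newman} for the cyclic groups and of Donten's classification \cite{Donten} for the noncyclic ones. Combining these sources produces the stated numbers $4,3,2,1$ of classes for $D_{2a}$ with $a=2,3,4,6$ and $3$ classes each for $A_4$ and $S_4$. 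I would therefore present the first two steps as a self-contained crystallographic argument and invoke \cite{Newman} and \cite{Donten} for the conjugacy-class count.
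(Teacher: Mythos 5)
Your proposal is correct, but it does strictly more than the paper, which offers no proof at all: the text preceding the proposition merely states that Newman's book \cite[Ch.~IX]{Newman} lists the isomorphism classes of finite subgroups of $SL(3,\Z)$ and that Donten \cite{Donten} classifies the noncyclic ones up to conjugacy in $GL(3,\Z)$, and declares the proposition to be a summary of those results. You, by contrast, supply a self-contained argument for the isomorphism classification --- averaging the inner product to land in $SO(3)$, invoking the classification of finite rotation groups, and cutting the list down with the crystallographic restriction $1+2\cos\theta\in\Z$ --- and only defer the genuinely tabular part (the $GL(3,\Z)$-conjugacy counts $4,3,2,1$ and $3,3$) to the same two references the paper relies on. That is a reasonable division of labour, since the conjugacy-class enumeration really is a lattice classification problem with no short conceptual proof. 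One small point to tighten: your existence step claims the required cyclic and dihedral groups all sit inside the determinant-one signed permutation group $\iso S_4$, but $\Z_6$ and $D_{12}$ do not ($S_4$ has no element of order $6$ and its only subgroup of order $12$ is $A_4$); your parenthetical about other rank-$3$ lattices covers this, but you should make it explicit, e.g.\ by block-embedding the order-$6$ element $\left(\begin{smallmatrix}0&-1\\1&1\end{smallmatrix}\right)$ of $SL(2,\Z)$ as $\mathrm{diag}(M,1)$, and realizing $D_{12}$ by sending a reflection $R$ of the hexagonal lattice to $\mathrm{diag}(R,\det R)$ so as to stay inside $SL(3,\Z)$.
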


\begin{lemma}\label{1-dim-fixed-pts}
For a non-identity matrix $M\in SL(3,\Z)$ of finite order the fixed
point set of $M$ is of dimension one.
\end{lemma}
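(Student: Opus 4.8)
The plan is to reduce the statement to an elementary computation with the eigenvalues of $M$ and then invoke the holomorphic Lefschetz formula. Since $A^3$ is an abelian variety of dimension $3$ and $M$ acts on it by an endomorphism fixing the origin, Theorem \ref{Lefschetz} applies directly: because $d=1$, the analytic representation $\eta_{an}(M)$ is simply $M$ regarded as an element of $GL(3,\C)$, namely the action of $M$ on the universal cover $\C^3$ of $A^3$. Hence $\dim Fix(M)$ equals the multiplicity of $1$ as an eigenvalue of $M$, and it suffices to prove that for a non-identity matrix $M\in SL(3,\Z)$ of finite order the eigenvalue $1$ occurs with multiplicity exactly one.

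First I would record the basic constraints on the eigenvalues $\lambda_1,\lambda_2,\lambda_3$ of $M$. Being of finite order, $M$ is diagonalizable over $\C$ and all $\lambda_i$ are roots of unity; since $M$ is an integral matrix its characteristic polynomial has real coefficients, so the non-real eigenvalues occur in complex conjugate pairs; and $\det M=1$ forces $\lambda_1\lambda_2\lambda_3=1$. Being an odd-degree real polynomial, the characteristic polynomial has at least one real root, so at least one $\lambda_i$ is a real root of unity, that is $\lambda_i\in\{1,-1\}$.

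Next I would show that $1$ is necessarily an eigenvalue. If the real eigenvalue equals $1$ there is nothing to prove. Otherwise the real eigenvalue is $-1$, and the remaining two eigenvalues satisfy $\lambda_2\lambda_3=-1$. They cannot form a complex conjugate pair, since a conjugate pair of roots of unity has product $|\lambda|^2=1\neq-1$; hence both are real roots of unity, and the only way to realize product $-1$ is the pair $\{1,-1\}$. In either case $1$ appears among the eigenvalues. Finally, to pin down the multiplicity: if $1$ occurred with multiplicity at least two, then two eigenvalues equal $1$ and the relation $\lambda_1\lambda_2\lambda_3=1$ forces the third to equal $1$ as well, whence diagonalizability gives $M=\mathrm{id}$, contrary to hypothesis. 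Thus the multiplicity of $1$ is exactly one, and by the reduction above $Fix(M)$ has dimension one.

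The only genuinely delicate point is the middle step, ruling out that $1$ is not an eigenvalue at all. This is precisely where the normalization $\det M=1$ is essential rather than merely $M\in GL(3,\Z)$: it is exactly what excludes eigenvalue patterns such as $\{-1,\zeta,\bar\zeta\}$ with $\zeta$ a non-real root of unity, for which the determinant would be $-|\zeta|^2=-1$. I expect this determinant bookkeeping, together with the conjugate-pair argument, to be the crux, while the passage from the linear statement to $\dim Fix(M)$ on $A^3$ is a formal application of Theorem \ref{Lefschetz}.
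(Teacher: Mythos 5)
Your proof is correct and follows essentially the same route as the paper's: eigenvalues are roots of unity with real characteristic polynomial and determinant $1$, a non-real conjugate pair has product $1$, hence $1$ occurs as an eigenvalue with multiplicity exactly one unless $M$ is the identity. You merely make explicit two points the paper leaves implicit, namely the reduction to eigenvalue multiplicities via Theorem \ref{Lefschetz} and the case analysis showing the eigenvalue $1$ cannot be absent.
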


\begin{proof}
The eigenvalues of $M$ are roots of unity and their product is 1 and
at least one of them is a real number, hence equal $\pm 1$. If
$\lambda_1$ and $\lambda_2$ are non-real eigenvalues then, as roots of
a degree 3 real polynomial, they are conjugate hence their product is
1. Thus, one of the eigenvalues is 1 and the eigenspace of 1 is either
of dimension 1 or 3.
\end{proof}

In particular the case of cyclic groups is not allowed since we assume that the fixed point set of $G$ is $\{0\}$. 

\smallskip
Consider the case of $D_4=\Z_2\times\Z_2$, generated by matrices 
$$\begin{array}{ccc}
A_{001}=\left(\begin{array}{rrr}-1&0&0\\0&-1&0\\0&0&1\end{array}\right)&
A_{010}=\left(\begin{array}{rrr}-1&0&0\\0&1&0\\0&0&-1\end{array}\right)&
A_{100}=\left(\begin{array}{rrr}1&0&0\\0&-1&0\\0&0&-1\end{array}\right)
\end{array}$$
In toric terms the quotient map $\C^3\ra\C^3/D_4$ is given by
extending the standard integral lattice by adding generators
$(1/2,1/2,0)$, $(1/2,0,1/2)$ and, subsequently, also
$(0,1/2,1/2)$. It admits two types of resolution.  The first one is
obtained by consecutive blow-up of singularities of $\Z_2$ actions, as
in \ref{solvableresolution}, while the second one is invariant with
respect to permutation of coordinates. They differ by a flop. The
picture below presents fans of both of them as of a section of the
first octant. Namely, we present the respective divisions of the
standard cone (first octant) with vertexes (standard basis) denoted by
$\circ$ and the boundary presented by dotted lines. Next, $\bullet$
denote exceptional divisors of the resolution and solid line segments
stand for the 2 dimensional cones of the resolution.

$$\begin{array}{lcr}
\begin{xy}<14pt,0pt>:
(-2,0)*={\circ}="0" ; (0,0)*={\bullet}="1" **@{.},
"1" ; (2,0)*={\circ}="2" **@{.},
"0" ; (-1,1.73)*={\bullet}="3" **@{.},
"1" ; "3" **@{-},
(1,1.73)*={\bullet}="4" ; "1" **@{-},
"2" ; "4" **@{.},
(0,3.26)*={\circ}="5" ; "3" **@{.},
"4" ; "5" **@{.},
"1" ; "5" **@{-}
\end{xy}
&
\phantom{\begin{xy}<10pt,0pt>:
(0,0)*={},
(-2,1)*={} ; (2,1)*={} **@{-}
\end{xy}}
&
\begin{xy}<14pt,0pt>:
(-2,0)*={\circ}="0" ; (0,0)*={\bullet}="1" **@{.},
"1" ; (2,0)*={\circ}="2" **@{.},
"0" ; (-1,1.73)*={\bullet}="3" **@{.},
"1" ; "3" **@{-},
(1,1.73)*={\bullet}="4" ; "1" **@{-},
"4" ; "3" **@{-},
"2" ; "4" **@{.},
(0,3.26)*={\circ}="5" ; "3" **@{.},
"4" ; "5" **@{.}
\end{xy}
\end{array}$$
We note that the right-hand side resolution is invariant with respect
to the action of permutations of coordinates of the standard cone thus
this resolution satisfies assumptions of lemma
\ref{solvableresolution} with respect to 
$D_4 \triangleleft A_3=D_4\rtimes \Z_3$ or 
$D_4\triangleleft S_4 =D_4\rtimes S_3$.

Let us note that the resolution of singularities is uniquely
defined in codimension 2, hence locally product in the sense of
definition \ref{locally-product-resolution}.

Finally note also that in this $3$-fold case it is not hard to construct a global crepant resolution.
Namely one first constructs a resolution of the $1$-dimensional singular strata: take an element in the isotropy group of a point in this strata, $g$, and consider $Fix(g) = \bigcup C_i$. Let $C_0$ be the component through the origin of 
$A^3$; this is a one dimensional abelian variety which is also a subgroup of $A^3$. Take the quotient $\pi: A^3 \ra A^3/C_0$ and take the induced action of $g$ on the surface $A^3/C_0$: resolve the singularities of the quotient 
$(A^3/C_0)/g$ which are the image of the curves $C_i$ under $\pi$. The lift up of this surface desingularization via $\pi$ will give a desingularization of $A^3/G$ along the strata $C_i$. 
After that one glues the desingularization of the isolated singularities.

\subsection{Cohomology, case of the octahedral group}
\label{octa}
In this section we discuss, as an example, the case of the octahedral group which is just a
representation of $S_4$ into $SL(3,\Z)$. We note that $S_4$ admits other representations in $SL(3,\Z)$
which are not conjugate in $GL(3,\Z)$ to the one which will be considered (the other two can be found in \cite{Donten}).

The following table summarizes information about singularities of
$A^3/G$ in codimension 2. In the first column, we write down conjugacy
classes of non-trivial elements $g$ of this group together with
equations of their fixed points in $A^3$ with coordinates
$(e_1,e_2,e_3)$ (column 2). In each case, because of lemma
\ref{1-dim-fixed-pts}, the fixed point set is a number of elliptic
curves, so in the next column we write the number of components of the
fixed point set.  Next, we write the group generated by this element
$\langle g\rangle$ together with $W(g):=N(\langle g\rangle)/\langle g
\rangle$.  The group $W(g)$ acts on the fixed point set of $g$ and
only in the first case it acts nontrivially on the set of components
while its action on each component (elliptic curve) is an involution
$e\mapsto -e$ hence it has 4 fixed points. Thus, by dividing by action
of $W(g)$, we get the set of singular points of the quotient whose
generic points (in each component) have isotropy $\langle g\rangle$,
we write it in the next column. In the last column we write the
virtual $W(g)$-Poincar\'e polynomial of the fiber of a minimal resolution
of the respective singularity. The polynomial provides the information
about the action of $W(g)$, that is $\epsilon$ is the representation
satisfying $\epsilon^2={\bf 1}$.  We note that since the minimal
resolution in case of surfaces is unique any such resolution will be
locally product in the sense of definition
\ref{locally-product-resolution} hence lemma \ref{structure-of-strata}
can be applied.

\smallskip
\begin{tabular}{ccccccc}
$g$&$Fix(g)$& $\#$ cmpnts&$\langle g\rangle$&
$W(g)$&$\widehat{Y(\langle g\rangle)}$&Poincar\'e\\
$\left(\begin{array}{rrr}-1&0&0\\0&-1&0\\0&0&1\end{array}\right)$&
$\begin{array}{c}2e_1=0\\2e_2=0\end{array}$&
16&$\Z_2$&$\Z_2\times\Z_2$&$6\times\bP^1$&$1+t^2$\\
$\left(\begin{array}{rrr}0&-1&0\\1&0&0\\0&0&1\end{array}\right)$&
$\begin{array}{c}e_1=e_2\\2e_1=0\end{array}$&
4&$\Z_4$&$\Z_2$&$4\times\bP^1$&$1+(2+\epsilon)t^2$\\
$\left(\begin{array}{rrr}0&1&0\\1&0&0\\0&0&-1\end{array}\right)$&
$\begin{array}{c}e_1=e_2\\2e_3=0\end{array}$
&4&$\Z_2$&$\Z_2$&$4\times\bP^1$&$1+t^2$\\
$\left(\begin{array}{rrr}0&0&1\\1&0&0\\0&1&0\end{array}\right)$&
$\begin{array}{c}e_1=e_2\\e_1=e_3\end{array}$&
1&$\Z_3$&$\Z_2$&$1\times\bP^1$&$1+(1+\epsilon)t^2$\\
\end{tabular}

Note that the components of the fixed point sets listed in the above
table meet in a set $\{p\in A^3: 2p=0\}=\{(e_1,e_2,e_3)\in A^3:
2e_1=2e_2=2e_3=0\}$ of cardinality $4^3=64$ which is where are located
points with non-cyclic isotropy groups. We list them in the subsequent
table, together with the virtual Poincar\'e polynomial of the fiber of a
crepant resolution, which by McKay correspondence is related to the
number of conjugacy classes of the respective group.  \par\medskip

\begin{tabular}{ccccc}
subgroup&fixed set in $\{2p=0\}$& $\#$ fixed pts&$\#$ sing
pts&Poincar\'e\\ 
$D_4$&$e_1\ne e_2\ne e_3\ne e_1$&24&4&$1+3t^2$\\ 
$3\times D_8$&$e_i=e_j\ne e_k$, $\{i,j,k\}=\{1,2,3\}$&36&12&$1+4t^2$\\
$G=S_4$&$e_1=e_2=e_3$&4&4&$1+4t^2$\\
\end{tabular}

Now we pass to computing the Poincar\'e polynomials of the respective strata
$Y([H])$. We write our calculation in typewriter type, in the form of
code of {\tt maxima}, \cite{maxima}. We start with the generic strata,
that is $Y([id])$. This is obtained by substracting from the Poincar\'e
polynomial of the quotient $A^3/S_4$ which we calculate by looking at
the invariants of the respective representation, as in \cite[Part
I]{FultonHarris}, the singular locus. The latter consists of 16 copies
of $\bP^1$, each with 4 points removed, and 16 points associated to
non-cyclic subgroups. The result is the polynomial of 3 dimensional
stratum\par {\tt
S3(t):=1+t\^{}2+4*t\^{}3+t\^{}4+t\^{}6-(15*(1+t\^{}2-4)+20);} \par
Next we consider 1-dimensional strata which are associated to cyclic
groups of type $\langle g\rangle$. We use lemma \ref{structure-of-strata}
and formula \ref{PoincareWF/G}. We have already noted how $W(g)$ acts
on $(A^3)^g_0$. On the other hand $W(g)$ acts on the
cohomology of a fiber of the resolution as it does on the conjugacy in
$\langle g\rangle$. Thus, in our case, the only interesting situation
is when $\langle g\rangle$ is either $\Z_3$ or $\Z_4$. In each of
these cases we have to look at the $\Z_2$ representation $\epsilon$, with
$\epsilon^2=1$ and invariant parts of respective polynomials
$(1+2\epsilon t+t^2)(1+(1+\epsilon)t^2)$ and $(1+2\epsilon
t+t^2)(1+(2+\epsilon)t^2)$. From the resulting polynomials one has to
subtract the part related to the fixed points of the action of $W(g)$. The
result is as follows. We write the polynomials associated to the
respective cyclic groups $\Z_2$, $\Z_3$ and $\Z_4$:\par {\tt\obeylines
S12(t):=10*((1+t\^{}2)*(1+t\^{}2)-4*(1+t\^{}2));
S13(t):=((t\^{}4+2*t\^{}3+2*t\^{}2+1)-4*(1+t\^{}2));
S14(t):=4*((2*t\^{}4+2*t\^{}3+3*t\^{}2+1)-4*(1+2*t\^{}2)); } \par
Finally, we consider 0-dimensional strata which, again, we compute
using McKay correspondence: \par{\tt
S0(t):=4*(1+3*t\^{}2)+(12+4)*(1+4*t\^{}2);}\par 

Calculating the sum
{\tt P(t):=S3(t)+S12(t)+S13(t)+S14(t)+S0(t)}
we get:

\begin{proposition}
The Poincar\'e polynomial of a crepant resolution of $A^3/S_4$,
$X \ra A^3/S_4$, is 
$$P_X(t)=t^6+20\,t^4+14\,t^3+20\,t^2+1.$$
\end{proposition}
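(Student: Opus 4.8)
The plan is to apply Principle~\ref{principle} directly to the octahedral data $G = S_4 \subset SL(3,\Z)$ with $d=1$, assembling the contribution of each isotropy stratum from the three tables above and summing. Two things must be in place before the formula applies. First, I must exhibit a crepant resolution $f: X \ra A^3/S_4$ that is locally product in the sense of Definition~\ref{locally-product-resolution}: along the one-dimensional strata the singularities are of surface type $\C^2/\langle g\rangle$, whose minimal resolution is unique and hence automatically locally product, while at the finitely many points with non-cyclic isotropy one glues in a crepant resolution of the three-dimensional quotient singularity. The explicit gluing construction sketched at the end of Section~\ref{Kummer} (resolving the one-dimensional strata first, then the isolated points) produces such an $X$. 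Second, the McKay correspondence must hold for each local model, which it does for the Du Val surface singularities and for the three-dimensional Gorenstein quotients $\C^3/D_4$, $\C^3/D_8$, $\C^3/S_4$ occurring here.

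Granting this, I would organize the computation along the stratification $Y = \bigsqcup_{[H]} Y([H])$. By Lemma~\ref{1-dim-fixed-pts} every non-identity element of $S_4 \subset SL(3,\Z)$ has a one-dimensional fixed locus, so the isotropy types are: the trivial group (the open three-dimensional stratum), the cyclic groups $\Z_2$, $\Z_3$, $\Z_4$ generating the one-dimensional strata, and the non-cyclic groups $D_4$, $D_8$, $S_4$ concentrated at the $64$ points of $\{2p=0\}$. The first table records, for each cyclic isotropy class, the fixed curves, the Weyl group $W(g)$ together with its action on $(A^3)^{\langle g\rangle}_0$, and the $W(g)$-equivariant Poincar\'e polynomial of the resolution fibre; the second table records, via McKay correspondence, the fibre polynomials over the non-cyclic points.

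The computation then proceeds stratum by stratum. For the open stratum $H=\{\mathrm{id}\}$ one has $W_K = S_4$ and fibre a point, so by Lemma~\ref{quotient-polynomial} the contribution is $\mu_0\big((1+t)^{2\rho}\big)$ minus the virtual polynomials of all lower strata (the $15$ projective lines with their four marked points removed, and the $20$ singular points), yielding $\mathtt{S3}(t)$. For each cyclic stratum $\langle g\rangle$ I would apply Lemma~\ref{structure-of-strata} and formula~\ref{PoincareWF/G}: form the product $(1+t)^{2\eta_K}\cdot P_{\langle g\rangle, W_K}$ of the curve factor with the McKay fibre, take the $W_K$-invariant part via $\mu_0$, and subtract the contribution of the zero-dimensional sub-strata lying in the closure; both $\Z_2$ classes feed into $\mathtt{S12}(t)$, while the subtle cases are $\Z_3$ and $\Z_4$, where the non-trivial character $\epsilon$ of $W(g)=\Z_2$ acts on the extra fibre cohomology and must be tracked, giving $\mathtt{S13}(t)$ and $\mathtt{S14}(t)$. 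The non-cyclic points contribute $\mathtt{S0}(t)$ directly from their McKay fibre polynomials weighted by the number of singular points. Summing $\mathtt{S3}+\mathtt{S12}+\mathtt{S13}+\mathtt{S14}+\mathtt{S0}$ gives the asserted $P_X(t)$.

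The main obstacle is the equivariant bookkeeping in the one-dimensional strata, not the final arithmetic. One must correctly decompose the $W(g)$-representation on the cohomology of the resolution fibre through the McKay correspondence, pair it with the $W(g)$-action on the curve $(A^3)^{\langle g\rangle}_0$ (an involution $e \mapsto -e$ on each elliptic component, with four fixed points), take invariants, and handle the inclusion-exclusion at the boundary points $2p=0$ where several fixed curves cross --- these are precisely the incidence coefficients $a_{K;K'}$ in Principle~\ref{principle}. Getting these intersections and the $\mu_0$-invariants right for $\Z_3$ and $\Z_4$ is where essentially all the care is required; once the cyclic contributions and the zero-dimensional contribution are correct, additivity of the virtual Poincar\'e polynomial collapses the sum to $t^6 + 20t^4 + 14t^3 + 20t^2 + 1$.
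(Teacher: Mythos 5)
Your proposal follows essentially the same route as the paper: stratify $Y=A^3/S_4$ by isotropy type using the two tables, verify the locally product property (automatic in codimension~2 by uniqueness of the minimal resolution) and the gluing of the resolution over the non-cyclic points, apply Principle~\ref{principle} with the $W(g)$-equivariant fibre polynomials (tracking the character $\epsilon$ for the $\Z_3$ and $\Z_4$ strata), and sum $\mathtt{S3}+\mathtt{S12}+\mathtt{S13}+\mathtt{S14}+\mathtt{S0}$. Your counts of $15$ exceptional $\bP^1$'s and $20$ non-cyclic singular points agree with the paper's actual computation, so the argument is correct and matches the paper's proof.
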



\section{Building upon abelian surfaces: $d=2$.}
\label{Got}

In this section we consider $A$ of dimension 2, i.e. an abelian surface. 

We will take the group $S_r$ with the standard representation $\rho_{\C}$ and the Kummer construction applied 
to $A$ will give the series of symplectic manifolds $Kum^{(r-1)}$, as noted in section \ref{sympl}. In particular
a global symplectic resolution exists and McKay correspondence holds. Therefore we can compute 
the Poincar\'e polynomial of a crepant resolution of $A^r/S_{r+1}$ with our method, i.e.~using \ref{principle}.
Invariants of Beauville's generalized Kummer manifolds have been dealt
with by G\"ottsche \cite{Gottsche}, G\"ottsche and Soergel
\cite{GottscheSoergel}, Debarre, \cite{Debarre}, Sawon,
\cite{SawonPhD} and Nieper-Wi{\ss}kirchen, \cite{Nieper1},
\cite{Nieper2}. 

The first step is computing the Poincar\'e polynomial of the quotient
$A^r/S_{r+1}$, which is obtained by calculating the invariant
parts of the
representation on $\Lambda^* H^*(A^r,\C)$ which is generated by wedge powers of $4\rho_\C$, see
\ref{quotient-polynomial}.

Next we are to understand the resolution of singularities of
$Y=A^r/S_{r+1}$; for this purpose we split this quotient into strata
related to points with a fixed isotropy group. We recall some standard
facts and definitions regarding the group of permutations, see
e.g.~\cite{JamesKerber}:
\begin{itemize}
\item the conjugacy classes of elements in $S_n$ are determined by their
decomposition into cycles and are described by partitions of $n$, that
is sequences of positive integers whose sum is $n$,
\item $(a_i^{b_i})=(a_1^{b_1},\dots ,a_m^{b_m})$, where $a_1>\cdots>
a_m>0$ and $b_i$ are positive integers, denotes partition
consisting of $b_i$ copies of $a_i$, so that $b_1\cdot
a_1+\cdots+b_m\cdot a_m= n$,
\item for the partition $(a_i^{b_i})=(a_1^{b_1},\dots ,a_r^{b_r})$
define its length equal to $b_1+\cdots+b_m$, in other words this is
the length of the sequence of $a_i$'s, each of them repeated $b_i$
times,
\item the Poincar\'e polynomial of $S_n$ is, by definition,
$P_{S_n}(t)=\sum_0^{n-1}\kappa_i t^{2i}$, where $\kappa_i$ is the number
of partitions of $n$ of length $n-i$,
\item we say that partition $({a_i'}^{b_i'})$ divides (or it is a refinement of) partition
$(a_j^{b_j})$ if the sequence of $a_i'$'s (with repetitions counted by
$b'_i$'s) can be divided into disjoint sequences whose sums yield
$a_j$'s (with repetitions counted by $b_j$'s)
\item given $\sigma\in S_n$, whose decomposition into cycles gives
partition $(a_i^{b_i})$, it determines a Young subgroup $S(\sigma)\iso
S_{a_1}^{\times b_1}\times\cdots\times S_{a_m}^{\times b_m}$,
\cite[Sect.~1.3]{JamesKerber}, the conjugacy class of this group in
$S_n$ will be denoted by $S(a_i^{b_i})$.
\item $N(S(a_i^{b_i}))/S(a_i^{b_i})\iso S_{b_1}\times\cdots\times
S_{b_m}$, c.f.~\cite[Sect.~4.1]{JamesKerber}; we denote this group by
$W(a_i^{b_i})$.
\end{itemize}

Fix coordinates $(e_0,e_1,\dots,e_r)$ on $A^{r+1}$, with $A^r\subset
A^{r+1}$ defined by equation $e_0+e_1+\cdots+e_r=0$.  For a
permutation $\sigma$ such that $[\sigma]_{S_n}=(a_i^{b_i})$, we take
its fixed point set $(A^r)^\sigma$.  Decomposition of $\sigma$ into
cycles gives equations of $(A^r)^\sigma$: for example a cycle
$(0,\dots,m)$ yields equations $e_0=\cdots=e_m$. The same is fixed by
the respective Young group $S(\sigma)$. On the other hand, each
partition defines a closed subset of the quotient $Y=A^r/S_{r+1}$
consisting of orbits of points fixed by the respective conjugacy class
of $S_{r+1}$. Inside this set there is a dense subset consisting of
orbits of points whose stabilizer is in the conjugacy class
$S(a_i^{b_i})$, we will denote it by $Y(a_i^{b_i})=Y(a_1^{b_1}\dots
a_m^{b_m})$. In particular $Y=\overline{Y(1^{r+1})}$ and the set of
fixed points of $S_{r+1}$ is $Y((r+1)^1)$. By $\widehat{Y(a_i^{b_i})}$
we denote the normalization of the closure $\overline{Y(a_i^{b_i})}$.
We have the restriction of the quotient map
$(A^r)^\sigma\ra\widehat{Y(a_i^{b_i})}\ra\overline{Y(a_i^{b_i})}$.

Sets $Y(a_1^{b_1}\dots a_m^{b_m})$ determine a stratification of both
$Y$ and its resolution $X\ra Y$, the inverse image of
$Y(a_1^{b_1}\dots a_m^{b_m})$ will be denoted by $X(a_1^{b_1}\dots
a_m^{b_m})$. Below, we list the facts regarding these sets needed to
compute the cohomology of $X$.

\begin{lemma}\label{Poincare-partition-properties}
In the above set up the following holds, with
$[\sigma]_{S_n}=(a_i^{b_i})$:
\begin{enumerate}
\item the number of fixed points, i.e. $\#Y((r+1)^1)$, is equal
$(r+1)^4$ and, more generally, the number of components of
$Y(a_i^{b_i})$ is $(GCD(a_i))^4$, where $GCD$ stands for greatest
common divisor,
\item the sets $(A^r)^\sigma$ and $Y(a_i^{b_i})$ are of pure dimension
$2(l-1)$, where $l$ is length of $(a_i^{b_i})$,
\item the set $Y({a'}_i^{b'_i})$ is contained in the closure
$\overline{Y(a_j^{b_j})}$ if and only if partition $(a_j^{b_j})$
divides $({a'}_i^{b'_i})$,
\item $\overline{(A^r)^\sigma_0}=(A^r)^\sigma$ and the morphism
$(A^r)^\sigma\ra\widehat{Y(a_i^{b_i})}$ is quotient by $W(a_i^{b_i})$,
\item the resolution $f: X\ra Y$ is locally product as in definition
\ref{locally-product-resolution} and we have the following version of
\ref{structure-of-strata}\begin{list}{$\circ$}{\listparindent=-1cm}
\item the map $X(a_1^{b_1}\dots a_m^{b_m})\ra Y(a_1^{b_1}\dots
a_m^{b_m})$ is \'etale fiber bundle whose fiber $F(a_1^{b_1}\dots
a_m^{b_m})$ is isomorphic to the product $F(a_1)^{\times
b_1}\times\cdots\times F(a_m)^{\times b_m}$ and has Poincar\'e
polynomial equal to $P_{S_{a_1}}^{b_1}\cdots P_{S_{a_m}}^{b_m}$
\item the action of $W(a_i^{b_i})$ lifts to the product
$(A^r)^\sigma\times F(a_i^{b_i})$ with $W(a_i^{b_i})\iso
S_{b_1}\times\cdots\times S_{b_m}$ acting on $F(a_i^{b_i})\iso
F(a_1)^{\times b_1}\times\cdots\times F(a_m)^{\times b_m}$ by
permuting respective factors of the product, that is $S_{b_i}$
permuting factors of $F(a_i)^{\times b_i}$,
\item there is a commutative diagram
$$\xymatrix{
(A^r)^\sigma\times F(a_i^{b_i})\ar[r]\ar[d]&
\left((A^r)^\sigma\times F(a_i^{b_i})\right)/W(a_i^{b_i})\ar[d]&
X(a_i^{b_i})\ar[l]\ar[d]\\
(A^r)^\sigma\ar[r]&\widehat{Y(a_i^{b_i})}&Y(a_i^{b_i})\ar[l]
}$$
where the horizontal arrows on the left hand side are quotient maps
while these on the right hand side are inclusions onto open subsets
\end{list}
\end{enumerate}
\end{lemma}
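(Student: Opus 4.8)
The statement collects five largely independent assertions, and my plan is to deduce (1), (2) and (4) from a direct analysis of the fixed locus $(A^r)^\sigma$, to read off (3) as a combinatorial consequence, and to treat the geometry of the resolution in (5) last, as the main obstacle. Throughout, write $c_1,\dots,c_l$ for the cycle lengths of $\sigma$ (so the multiset $\{c_k\}$ is $(a_i^{b_i})$) and $l$ for the length. First I would note that the fixed locus in $A^{r+1}$ is cut out by $e_i=e_j$ for $i,j$ in a common cycle, so assigning to each cycle its common value identifies $(A^r)^\sigma$ with the kernel of the homomorphism $\phi:A^l\lra A$, $(x_1,\dots,x_l)\mapsto\sum_k c_k x_k$ (the last equality encoding $A^r\subset A^{r+1}$). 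Since $\phi$ is surjective, $\ker\phi$ has pure dimension $(l-1)d=2(l-1)$, which gives (2) for $(A^r)^\sigma$, and hence for $Y(a_i^{b_i})$ after the finite quotient below. For the component count I would pass to the universal cover $A=V/\Lambda$ and linearize $\phi$ to $\Phi:V^l\to V$; the snake lemma for the two exponential sequences identifies $\pi_0(\ker\phi)\iso\Lambda/g\Lambda\iso(\Z/g)^4$, where $g=\gcd(c_k)=\gcd(a_i)$, using that $\Phi(\Lambda^l)=g\Lambda$. This is (1) for $(A^r)^\sigma$, and the special case $l=1$ recovers the $(r+1)^4$ fixed points of $S_{r+1}$ as the $(r+1)$-torsion $A[r+1]$.

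Next I would establish density and the $W$-quotient, i.e.\ (4). The stabilizer of any $p\in A^r$ is the Young subgroup preserving the equality pattern of its coordinates, so $(A^r)^\sigma=(A^r)^{S(\sigma)}$ and the complement $(A^r)^\sigma\setminus(A^r)^\sigma_0$ is the union of the fixed loci of strictly coarser partitions, which by (2) have strictly smaller dimension; hence $(A^r)^\sigma_0$ is dense and $\overline{(A^r)^\sigma_0}=(A^r)^\sigma$. Taking $H=S(\sigma)$ with $W(H)=W(a_i^{b_i})\iso S_{b_1}\times\cdots\times S_{b_m}$ as recorded in this section, the quotient statement of (4) is then exactly Lemma \ref{structure-of-strata} extended over this dense locus. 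Moreover $W(a_i^{b_i})$ only permutes cycles of equal length, so it leaves $\sum_k c_k v_k$ unchanged and therefore acts trivially on $\pi_0(\ker\phi)$; consequently $Y(a_i^{b_i})=(A^r)^\sigma_0/W$ has the same $g^4$ components, finishing (1).

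For the incidence relation (3) I would use the density just proved together with properness of $\pi_G$ to get $\overline{Y(a_i^{b_i})}=\pi_G\bigl((A^r)^\sigma\bigr)$. A point $p$ of type $({a'_i}^{b'_i})$ lies in this image precisely when its stabilizer $S({a'_i}^{b'_i})$ contains a conjugate of $S(a_i^{b_i})$, which happens exactly when the $a$-blocks can be nested inside the $a'$-blocks, i.e.\ when the parts of $(a_i^{b_i})$ group to sums giving the parts of $({a'_i}^{b'_i})$. By the definition recalled in this section this is precisely the condition that $(a_i^{b_i})$ divides $({a'_i}^{b'_i})$, which is (3).

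The remaining and most substantial point is the local product structure (5), which I expect to be the main obstacle. At $p$ with stabilizer $S(\sigma)\iso\prod_k S_{c_k}$ the transverse slice is $\a^r/\rho_\a(S(\sigma))$; decomposing the restriction of the standard representation as a trivial part $\C^{l-1}$ plus $\bigoplus_k\mathrm{std}_{c_k}$ and tensoring with $\a\iso\C^2$ exhibits the transverse singularity as the product $\prod_k\bigl(\C^{2(c_k-1)}/S_{c_k}\bigr)$ of the symplectic quotients resolved in Section \ref{sympl}. To see that the global resolution respects this product I would invoke the standard étale-local product decomposition of $Hilb^{r+1}(A)$ over the support of a length-$(r+1)$ subscheme: near a configuration supported at $l$ distinct points with multiplicities $c_k$ it is analytically a product $\prod_k Hilb^{c_k}(A)$, and restricting to the relevant null-fibers gives a product resolution with central fiber $F(a_i^{b_i})\iso\prod_i F(a_i)^{b_i}$, verifying Definition \ref{locally-product-resolution}. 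McKay correspondence gives $P_{F(a_i)}=P_{S_{a_i}}$, so the fiber has Poincar\'e polynomial $\prod_i P_{S_{a_i}}^{b_i}$, and $W(a_i^{b_i})\iso\prod_i S_{b_i}$ acts by permuting the support points of equal multiplicity, hence the equal factors $F(a_i)$, which yields the commutative diagram as the special case of Lemma \ref{structure-of-strata}. The hard part is exactly this matching: identifying the intrinsic, globally defined Hilbert-scheme resolution with the abstract product resolution transverse to each stratum, uniformly in the conjugacy class, which is what makes the resolution locally product.
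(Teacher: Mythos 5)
Your proposal is correct, and for parts (1)--(4) it essentially supplies the ``explicit calculations'' that the paper's one-paragraph proof leaves to the reader: the paper only spells out the case of $Fix(S_{r+1})$ as the $(r+1)$-torsion of $A$, whereas your identification of $(A^r)^\sigma$ with $\ker\bigl(A^l\ra A,\ (x_k)\mapsto\sum_k c_kx_k\bigr)$, giving pure dimension $2(l-1)$ and a component group isomorphic to the period lattice modulo $\gcd(a_i)$, hence $(\gcd(a_i))^4$ components on which $W(a_i^{b_i})$ acts trivially, is a clean way to obtain the general count. The genuine divergence is in part (5). The paper does not unwind the Hilbert--Chow morphism at all: it cites \cite{KaledinInv} for the cohomology of the central fiber $F((r+1)^1)$ and then invokes the uniqueness of crepant resolutions of symplectic singularities from \cite{FuNamikawa} to conclude that over each stratum the resolution must coincide with the product resolution, hence is locally product in the sense of Definition \ref{locally-product-resolution}. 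You instead establish the local product structure directly from the analytic-local factorization of $Hilb^{r+1}(A)$ over configurations supported at $l$ distinct points, restricted to the null-fiber. Your route is more explicit and self-contained, but it only proves the lemma for the Beauville (Hilbert-scheme) resolution; since $f:X\ra Y$ is set up in the paper as an arbitrary crepant resolution, the appeal to \cite{FuNamikawa} is what upgrades the statement to any such resolution (and is what justifies the later remarks that different resolutions give the same Poincar\'e polynomial). To reach that generality you would need to add the uniqueness step; otherwise your argument covers everything the subsequent computations actually use.
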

\begin{proof}
Most of the above claims follow by explicit calculations and the
discussion preceding lemma. For example, the set of fixed points of
the action of $S_{r+1}$ is defined in $A^{r+1}$ by equations
$e_0=\cdots=e_r$ and $e_0+\cdots+e_r=0$ hence can be identified with
these points in $A$ whose $(r+1)$-th multiple is zero. The cohomology
of the special fiber of resolution of $A^r/S_{r+1}$, that is of
$F((r+1)^1)$, is known by \cite{KaledinInv}.  The case of
$F((a_i^{b_i}))$ follows because of the uniqueness result from
\cite{FuNamikawa}. This yields that the resolution is locally product
in the sense of \ref{locally-product-resolution}.
\end{proof}

The above lemma provides us with a general layout for computing
cohomology of a generalized Kummer variety. In the present section we
do explicit calculations of the
Poincar\'e polynomial for the generalized Kummer manifolds of dimension $6$
which is a resolution of $A^3/S_4$.

Again, the following lines in typewriter type are in the form {\tt
maxima}, \cite{maxima}.

First, we write the Poincar\'e polynomials of fibers of the resolution over
the respective strata 
{\tt\obeylines F211(t):=1+t\^{}2; F31(t):=1+t\^{}2+t\^{}4; 
F22(t):=(1+t\^{}2)\^{}2; F4(t):=1+t\^{}2+2*t\^{}4+t\^{}6;}

Next we write the polynomials for the surface $A$ and its quotient $A/\Z_2$
where $\Z_2$ acts on $A$ by multiplying by $(-1)$.
{\tt\obeylines 
A(t):=(1+t)\^{}4; B(t):=1+6*t\^{}2+t\^{}4;}

The next line describes cohomology of
$\left(A\times\bP^1\times\bP^1\right)/\Z_2$ where $\Z_2$ acts on $A$
as above while its action on $\bP^1\times\bP^1$ interchanges the
factors. In terms of the $\Z_2$ action the Poincar\'e polynomial of the
product is $(1+\epsilon\cdot t)^4\cdot (1+(1+\epsilon)\cdot t^2 +t^4)$
where $\epsilon^2=1$. Note that we write both, the polynomial of {\tt
A} as well as {\tt F22} depending on the group action. The
invariant part of this action has the following polynomial.

{\tt\obeylines C(t):=1+7*t\^{}2+4*t\^{}3+8*t\^{}4+4*t\^{}5+7*t\^{}6+t\^{}8;}

And finally the Poincar\'e polynomial of $A^3/S_4$ which we calculate by
looking at the invariants of the respective representation, as in
\cite[Part I]{FultonHarris}:
{\tt\obeylines
Q(t):=1+6*t\^{}2+4*t\^{}3+22*t\^{}4+24*t\^{}5+62*t\^{}6+
24*t\^{}7+22*t\^{}8+4*t\^{}9+6*t\^{}10+t\^{}12;}

Now we compute virtual Poincar\'e polynomials of strata of $Y$ and $X$,
denoted by {\tt R} and {\tt S}, respectively. The first are the fixed
points.

{\tt\obeylines
R4(t):=4\^{}4; S4(t):=R4(t)*F4(t);}

Next, take an element of $S_4$ whose decomposition consists of two
cycles of length two, for example $\sigma=(01)(23)$. Its fixed point
set is given by equations $e_0=e_1$, $e_2=e_3$, $e_0+\cdots+e_3=0$,
hence $2\cdot(e_0+e_2)=0$ which makes 16 copies of $A$. The group
$W(2^2))\iso\Z_2$ acts on each of the components by involution, we use
\ref{Poincare-partition-properties}.

{\tt R22(t):=16*B(t)-R4(t); S22(t):=16*C(t)-R4(t)*(1+t\^{}2+t\^{}4);}

The next one is easy, as $W(3,1)$ is trivial.

{\tt R31(t):=A(t)-R4(t); S31(t):=R31(t)*F31(t);}

The fixed point set of $\sigma=(01)$ contains both, the fixed point
set of $(01)(23)$ and of $(012)$, the former one consists of 16 copies
of $A$ and is the fixed point set of the action of $W(2,1,1)\iso\Z_2$,
because $(01)(23)$ is contained in the normalizer of $(01)$. Note that
the action of $W(2,1,1)$ on $F(2,1,1)$ is trivial.

{\tt R211(t):=A(t)*(B(t)-16)-R31(t); S211(t):=R211(t)*F211(t);}

Finally, we write down the general stratum and the Poincar\'e polynomial
of the resolution.

{\tt\obeylines R1111(t):=Q(t)-(R211(t)+R31(t)+R22(t)+R4(t));
S1111(t):=R1111(t); P(t):=S1111(t)+S211(t)+S31(t)+S22(t)+S4(t). }

All together these prove the following:

\begin{proposition}\label{Poincare-Kummer}
The Poincar\'e polynomial of the Beauville's generalized
Kummer variety, which is  given by a crepant resolution of $A^3/S_4$, where
$A$ is a two dimensional torus, is :
$$t^{12}+7\,t^{10}+8\,t^9+51\,t^8+56\,t^7+458\,t^6+56\,t^5+51\,t^4+8
\,t^3+7\,t^2+1$$
\end{proposition}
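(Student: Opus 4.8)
The plan is to apply Principle \ref{principle} directly, using the stratification of $Y=A^3/S_4$ by partitions of $4$ supplied by Lemma \ref{Poincare-partition-properties}, and then to sum virtual Poincar\'e polynomials over the strata by additivity. There are exactly five conjugacy classes of isotropy, indexed by the partitions $1^4$, $2\,1^2$, $3\,1$, $2^2$ and $4$; by part (5) of that lemma the symplectic resolution $X=Kum^{(3)}(A)$ is locally product, so over each stratum $Y(a_i^{b_i})$ the resolution is an \'etale bundle with fiber $F(a_1)^{\times b_1}\times\cdots\times F(a_m)^{\times b_m}$ of Poincar\'e polynomial $P_{S_{a_1}}^{b_1}\cdots P_{S_{a_m}}^{b_m}$, on which the Weyl group $W(a_i^{b_i})\iso S_{b_1}\times\cdots\times S_{b_m}$ acts by permuting the factors.

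First I would record the three elementary inputs. (i) The fiber polynomials $F(2)=P_{S_2}=1+t^2$, $F(3)=P_{S_3}=1+t^2+t^4$, $F(4)=P_{S_4}=1+t^2+2t^4+t^6$ and $F(2^2)=(1+t^2)^2$, read off from McKay correspondence as in \ref{structure-of-strata}. (ii) The base polynomials of the normalized closures $\widehat{Y(a_i^{b_i})}$, each equal to $\mu_0$ of the $W$-equivariant polynomial $(1+t)^{4\eta}$ of the relevant fixed torus (formula \ref{PoincareWF/G} with $d=2$): the $4$-stratum is $4^4=256$ reduced points, the $3\,1$-stratum is one copy of $A$ with trivial $W$ (giving $(1+t)^4$), the $2\,1^2$-stratum is $A^2/\Z_2$, and the $2^2$-stratum is $16$ copies of $A/\{\pm1\}$ (virtual polynomial $1+6t^2+t^4$ each). (iii) The total quotient polynomial $P_Y=\mu_0((1+t)^{4\rho})$ computed from the invariant cohomology of $A^3$ as in Lemma \ref{quotient-polynomial}, which is the degree-$12$ polynomial denoted $Q(t)$.

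Then I would compute, stratum by stratum, the virtual Poincar\'e polynomial of each locally closed piece $X(a_i^{b_i})$, working from the deepest stratum outward so that the boundary corrections of Principle \ref{principle} are already available. For the partitions $4$ and $3\,1$ the Weyl group acts trivially on the fiber, so the answer is simply (base)$\times$(fiber) with the $256$ fixed points of $S_4$ subtracted as the locus where isotropy jumps. The two delicate strata are $2\,1^2$ and $2^2$: for $2^2$ the involution in $W\iso S_2$ acts on the base component $A$ as $a\mapsto-a$ (sixteen fixed points) and simultaneously swaps the two $\bP^1$ factors of the fiber, so I must compute the invariant part $C(t)$ of the equivariant polynomial $(1+\epsilon t)^4\cdot(1+(1+\epsilon)t^2+t^4)$ and then remove the boundary, where each of the $256$ limit points carries the swap-quotient fiber $(\bP^1\times\bP^1)/S_2$ of polynomial $1+t^2+t^4$; for $2\,1^2$ one must resolve the double incidence, its closure containing both the $2^2$ and the $3\,1$ strata, which is exactly where the inclusion--exclusion governed by the coefficients $a_{K;K'}$ of Principle \ref{principle} is genuinely used. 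Additivity of virtual Poincar\'e polynomials over $X=\bigsqcup X(a_i^{b_i})$ then gives $P_X$ as the sum of the five stratum contributions, and since $X$ is smooth and projective this virtual polynomial is the honest Poincar\'e polynomial, producing the stated answer.

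The main obstacle will be precisely this boundary bookkeeping for the overlapping strata $2\,1^2$ and $2^2$: one must check that the equivariant computation of $C(t)$ correctly separates invariant from anti-invariant classes in $H^2(\bP^1\times\bP^1)$ and in $H^*(A)$, and that along the boundary the fiber genuinely degenerates to $(\bP^1\times\bP^1)/S_2$ rather than to the global resolution fiber $F(4)$, as warned in the remark following Lemma \ref{structure-of-strata}. Everything else is the routine evaluation encoded in the preceding {\tt maxima} lines.
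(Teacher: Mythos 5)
Your proposal follows essentially the same route as the paper: the same stratification of $A^3/S_4$ by the five partitions of $4$, the same fiber polynomials $F(2),F(3),F(4),F(2^2)$, the same base polynomials ($256$ points, $A$, $A\times(A/\pm1)$, $16$ copies of $A/\pm1$, and $Q(t)$), and the same two delicate points — the $\Z_2$-equivariant polynomial $(1+\epsilon t)^4(1+(1+\epsilon)t^2+t^4)$ whose invariant part is $C(t)$, with the $(\bP^1\times\bP^1)/S_2$ fiber of polynomial $1+t^2+t^4$ over the $256$ boundary points, and the inclusion--exclusion for the $2\,1^2$ stratum. Carrying out the arithmetic you outline reproduces the paper's {\tt maxima} computation line for line, so the plan is correct.
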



\section{Building upon a $4$-dimensional abelian manifold.}
\label{general}

In this section we  consider $4$-dimensional abelian varieties, $A$, with the action of 
a finite group $G$. However we will  prescribe the 
action of $G$ only on the complex cohomolgy $H^1(A, \C)$. In other words we fix
the complex representation $\rho _\C: G \ra SL(\a)$ and we do not require that 
it comes from an integral representation $\rho _\Z$.

Moreover we will take the three groups in the theorem \ref{BLS} which have a four dimensional complex representation of type $V\oplus V^*$; by the theorem there exists a local symplectic resolution of $V\oplus V^*/G$
and in the first two cases if $A = S \times S$, where $S$ is an abelian surface, also a global one (see the end of section \ref{sympl}). 

Our main tools in this section are the semismallness property of symplectic resolution, namely \ref{semism}, and the 
Lefschetz fixed point formula, namely \ref{Lefschetz}.

\subsection{The Binary Tetrahedral Group}
\begin{theorem}
Let $G= Q_8 \rtimes \Z_3$  be the binary tetrahedral group acting on a 4-dimensional
complex torus $A$ such that its action on the complex cohomology group $H^1(A, \C)$ is
equivalent to the representations $S_1\oplus S_2$, as in \ref{BLS}. Then the quotient $A/G$ does not admit a
(global) symplectic resolution.
\end{theorem}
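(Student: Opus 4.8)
The plan is to use the semismallness criterion of Proposition \ref{semism} to derive a contradiction from the assumption that $A/G$ admits a (global) symplectic resolution. Recall that $A/G$ is a symplectic variety (the representation on $H^1(A,\C)\iso\a$ is of the form $S_1\oplus S_2$, and since $S_2\iso S_1^*$ this gives a symplectic action on $\a\iso V\oplus V^*$ with $V=S_1$), so by the remark at the end of section \ref{sympl} a symplectic resolution is the same as a crepant resolution. The strategy is therefore: first classify the fixed-point loci $Fix(g)$ for $g\in G\setminus\{1\}$ acting on $A$, and then show that semismallness (pure codimension $2$) fails.

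First I would work out, for each conjugacy class of nontrivial elements $g\in G= Q_8\rtimes\Z_3$, the eigenvalues of the analytic representation $\eta_{an}(g)$ on $\a\iso\C^4$. Since the tangent action is $S_1\oplus S_2$ with $S_2=S_1^*$, the eigenvalue multiset on $\C^4$ is that of $S_1(g)$ together with its complex conjugates; for the two-dimensional representations $S_1$ of $T$ the eigenvalues on $S_1(g)$ are determined by the order and trace of $g$. I would tabulate this across the classes: the central element $-1$ (order $2$), the elements $\pm I,\pm J,\pm K$ (order $4$), and the order-$3$ and order-$6$ elements coming from the $\Z_3$ factor. Using Theorem \ref{Lefschetz}, the dimension of $Fix(g)$ in $A$ equals the multiplicity of the eigenvalue $1$ of $\eta_{an}(g)$, so I can read off $\dim Fix(g)$, hence $\operatorname{codim} Fix(g)$, directly from these eigenvalue multisets.

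The main point, which I expect to be the heart of the argument, is to exhibit an element $g$ whose fixed locus has the wrong codimension for semismallness—that is, $Fix(g)$ is nonempty but $\operatorname{codim}_A Fix(g)\neq 2$. The natural candidate is the central involution $-1\in Q_8$: its analytic action on $\a$ is multiplication by $-1$ on all of $\C^4$ (since $S_1(-1)=-\mathrm{id}$ and likewise on $S_2$), so $\eta_{an}(-1)$ has no eigenvalue $1$, whence $Fix(-1)$ is $0$-dimensional and, by the last clause of Theorem \ref{Lefschetz}, nonempty (of cardinality $|det(1-\eta_{an}(-1))|^2=|det(2\,\mathrm{id})|^2=2^8$). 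Thus $Fix(-1)$ is a nonempty codimension-$4$ locus inside $F=\bigcup_{g\neq 1}Fix(g)\subset A$. If I further check that no larger-dimensional fixed locus of another element contains these points in a way that would make $F$ have pure codimension $2$ near them—equivalently, that these isolated fixed points of $-1$ are genuine $0$-dimensional components of $F$—then $F$ is not of pure codimension $2$.

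Finally I would invoke Proposition \ref{semism} with $X=A$ and the finite symplectic group $G$: if $A/G$ admitted a symplectic resolution, $F$ would be empty or of pure codimension $2$, contradicting the existence of the $0$-dimensional components of $F$ coming from $Fix(-1)$. This yields the claim that $A/G$ has no global symplectic resolution, and hence (by the crepant$=$symplectic equivalence) no global crepant resolution either. The one subtlety to verify carefully is precisely the purity failure: I must confirm that the order-$4$ elements (whose fixed loci I expect to be codimension $2$, giving the local symplectic reflections that make the \emph{local} resolution exist) do not sweep out all of $F$, so that the codimension-$4$ points from the center survive as isolated components; this is the step where the global obstruction, as opposed to the local one, really appears.
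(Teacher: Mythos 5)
Your overall strategy coincides with the paper's: apply Proposition \ref{semism} to $F=\bigcup_{g\neq 1}Fix(g)\subset A$, use Theorem \ref{Lefschetz} to see that $Fix(-1)$ consists of $2^8$ isolated points, and conclude that $F$ fails to have pure codimension $2$. However, the step you defer as ``the one subtlety to verify carefully'' is in fact the entire content of the proof, and as stated your plan for it rests on a miscalculation. The elements whose fixed loci have codimension $2$ are \emph{not} the order-$4$ elements $\pm I,\pm J,\pm K$: in $\rho_1\oplus\rho_2$ these have eigenvalues $\{\pm i\epsilon_3,\pm i\epsilon_3^{-1}\}$, none equal to $1$, so their fixed points are isolated (and, since $I^2=-1$, contained in $Fix(-1)$). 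The symplectic reflections are the eight order-$3$ elements, for which $\rho_0(g)$ has eigenvalues $\epsilon_3,\epsilon_3^2$, so that $\rho_1(g)\oplus\rho_2(g)$ has eigenvalue $1$ with multiplicity exactly $2$; their fixed loci are unions of abelian surfaces. So the question you must answer is whether all $2^8$ points of $Fix(-1)$ lie on $\bigcup_i Fix(H_i)$, where $H_1,\dots,H_4$ are the $3$-Sylow subgroups — and it is \emph{not} true that none of them do, so the purity failure is not a formality.

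The missing quantitative step, which is how the paper closes the argument, is this: $Fix(-1)\cap Fix(H_i)=Fix(\langle -1,H_i\rangle)=Fix(M_i)$, and $M_i\iso\Z_6$ is generated by an order-$6$ element whose eigenvalues on $H^1(A,\C)$ are $\{-1,-1,\epsilon_6,\epsilon_6^5\}$; Theorem \ref{Lefschetz} then gives $|Fix(M_i)|=|2\cdot 2\cdot(1-\epsilon_6)(1-\epsilon_6^5)|^2=16$. Hence at most $4\cdot 16=64$ of the $256$ points of $Fix(-1)$ can lie on the codimension-$2$ part of $F$ (the paper sharpens this to $64-3s$ with $s=|Fix(G)|$, using that any two $M_i$ generate $G$), leaving at least $192$ genuine zero-dimensional components of $F$ and the desired contradiction with \ref{semism}. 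Without identifying the order-$3$ elements as the only sources of positive-dimensional fixed loci and without this Lefschetz count of $|Fix(M_i)|$, your argument does not rule out the a priori possibility that $Fix(-1)\subset\bigcup_i Fix(H_i)$, so the proof is incomplete as it stands.
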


\begin{proof} We will use the description of $G$ and its representation given in 
 \cite{Lehn-Sorger}, see also \ref{representations}.  

The following are the non trivial subgroups of $T$: $T >  M_i >H_i$ for $i =1,...,4$, where $H_i$ are the $4$ conjugate $3$-Sylow subgroups and $M_j = \langle-1, H_i \rangle$ are four conjugate subgroups of order $6$;
$ Q_8 >L_j$ where $L_j$ are  $3$ conjugate subgroups of order $4$ generated respectively by $I,J,K$.
Moreover it has a nontrivial center equal to $\langle-1\rangle$ contained in $M_i$ and in $L_j$
 Note that any two of the groups $M_j$  generate $G$ and $\langle -1\rangle$ is contained in any
non-trivial subgroup of $G$ of order different from $3$.

The element $-1$ acts on $H^1(A, \C)$ as the diagonal matrix
with all eigenvalue $-1$ while a generator of $K_i$ acts on $H^1(A, \C)$ 
as the diagonal matrix with eigenvalues $\{-1, -1, e_6,e_6^5\}$, where $e_6$ is a 
$6$-th primitive root of unity. This implies, in particular, that fixed points of $-1$, as well
as of of $K_j$, are isolated;  by the Lefschetz fixed point formula, \ref{Lefschetz}, we
get that there are $2^8$ and, respectively, $2^4$ of them.

For each $j$ the set $Fix(M_j)$ is clearly contained in $Fix(-1)$. We claim
that, if $A/G$ admits a symplectic resolution, then $Fix(-1$)
is the union of $Fix(M_j)$. Indeed,  isolated
quotient symplectic singularities have no crepant resolution in
dimension $> 2$, see (\ref{semism}); so any point of $Fix(-1)$ must be contained
in $Fix(H_i)$ for some $i$,  since the groups $H_i$ are the only subgroups not
containing $-1$ (and therefore they are the only ones whose fixed points sets are not
contained in $Fix(-1)$).

Therefore, if $A/G$ admits a symplectic resolution then we can write
$Fix(-1)$ as a disjoint union of four copies of $(Fix(M_j)\setminus Fix(G))$
and of $Fix(G)$ (note that any two $Fix(M_j)$ intersect along $Fix(G)$). 

Thus, if $s$ is the cardinality of $Fix(G)$, we get the following equality
$$2^8 =|Fix(-1)|=  4(|Fix(M_j)|-s) + s = 4(2^4) - 3s = 2^6 -3s$$
 which is a contradiction.
 \end{proof}

\subsection{The Dihedral group of order $6$.}
 
\begin{theorem} Let $G=D_6=S_3$ be acting on a 4-dimensional complex
torus  $A$ so that its representation on the complex
cohomology is
the sum of two copies of the standard representation. Suppose that the
quotient of the action $A/G$ admits a symplectic resolution of singularities,
$X  \ra A/G$. Then the complex cohomology of X is uniquely determined, that is the
Poincar\'e polynomial is as follows:
$$P_X = 1 + 7t^2 + 8t^3 + 108t^4 + 8t^5 + 7t^6 + t^8.$$
 \end{theorem}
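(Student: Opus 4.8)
The plan is to compute the Poincaré polynomial by the stratification method of Principle~\ref{principle}, exactly as was done for the six-dimensional Kummer variety in Proposition~\ref{Poincare-Kummer}, but now with $d=4$ (an abelian fourfold) and $G=S_3$ acting via two copies of the standard representation $\rho_\C$ of $S_3$ in $SL(2,\C)$. Concretely, $V=\a\iso\C^2$ carries $\rho_\C$, and $H^1(A,\C)\iso V\oplus V^*$ gives the tangent action $\rho_\a=2\rho_\C$ on $\C^4$. First I would enumerate the conjugacy classes of subgroups of $S_3$: the trivial group, the three conjugate copies of $\Z_2$ (generated by transpositions), the cyclic $\Z_3=A_3$, and all of $S_3$ itself; these index the strata $Y([H])$. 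For each I must record the dimension of the fixed locus $(A^4)^H$, the number of its components, and the Weyl group $W(H)=N(H)/H$ acting on it.

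Next I would assemble the two kinds of local data feeding Principle~\ref{principle}. For the torus factors, I compute $(1+t)^{2d\eta_K}=(1+t)^{8\eta_K}$ for each stratum, where $\eta_K$ is the induced $W_K$-representation on the fixed subtorus, and take $\mu_0$ of the $W_K$-invariant part as in Lemma~\ref{quotient-polynomial}. For the resolution fibers, I would use McKay correspondence together with the semismallness of the symplectic resolution (Proposition~\ref{semism}): since a symplectic resolution is crepant and its exceptional locus has pure codimension $2$, each local quotient singularity $\C^{4}/H$ along a codimension-$2$ stratum is resolved by a fiber whose virtual Poincaré polynomial is dictated by the conjugacy classes of $H$. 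For $H=\Z_2$ and $H=\Z_3$ these are the familiar $A_1$- and $A_2$-type surface resolutions crossed appropriately, giving fibers with Poincaré polynomials built from $1+t^2$ and $1+t^2+t^4$; for $H=S_3$ the central fiber over a point of $Y(S_3)$ is determined by the symplectic resolution of $\C^4/S_3$, whose cohomology again comes from McKay. Crucially, because we only prescribe $\rho_\C$ and not $\rho_\Z$, I would invoke semismallness and uniqueness to argue that these fiber polynomials — and hence the whole answer — are \emph{independent} of the integral lift; this is precisely what makes the cohomology ``uniquely determined'' despite the missing $\rho_\Z$.

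I would then run the inclusion-exclusion bookkeeping of Principle~\ref{principle}: for each stratum $[H]$ with component $K$, form $\mu_0\bigl((1+t)^{8\eta_K}\cdot P_{H,W_K}\bigr)$, then subtract the contributions pulled in from the deeper strata $[H']\succ[H]$ with the incidence coefficients $a_{K;K'}$, reflecting how the $\Z_2$- and $\Z_3$-loci meet inside $Fix(S_3)$. In practice I would set up the computation in the same \texttt{maxima} style as Section~\ref{Got}: write fiber polynomials \texttt{F2}, \texttt{F3}, \texttt{FG}; write torus polynomials for $A^2$, $A$ and their relevant quotients; compute the invariant parts governing the $W(H)$-actions on both torus and fiber cohomology; and finally sum the stratum polynomials \texttt{S}. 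The resulting polynomial should be palindromic (as the canonical class is trivial and the variety is compact symplectic) and should match the stated $P_X = 1 + 7t^2 + 8t^3 + 108t^4 + 8t^5 + 7t^6 + t^8$.

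The main obstacle I anticipate is twofold. First, determining the correct $W(H)$-module structure on the cohomology of the exceptional fibers — not just their total Betti numbers but how each Weyl group permutes or fixes the cohomology classes, since only the invariant part survives in $\mu_0$; this requires tracking the adjoint action $w([h]_H)\mapsto[whw^{-1}]_H$ on conjugacy classes through McKay. Second, and more delicate, is pinning down the incidence numbers $a_{K;K'}$ and establishing that the answer genuinely does not depend on $\rho_\Z$. Here the leverage is semismallness (Proposition~\ref{semism}), which forces the fixed loci of the non-central elements to be codimension~$2$ and thereby rigidifies both the stratification incidences and the local fiber structure; I expect that a careful Lefschetz-fixed-point count (Theorem~\ref{Lefschetz}) for the relevant group elements — analogous to the cardinality computations in the binary tetrahedral case — will supply the component counts and intersection multiplicities needed to fix every $a_{K;K'}$ unambiguously, which is exactly the step that makes the Poincaré polynomial uniquely determined.
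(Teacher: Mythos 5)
Your overall framework matches the paper's: stratify $A/S_3$ by the conjugacy classes of isotropy subgroups ($1$, the three $\Z_2$'s, $\Z_3$, $S_3$), use Lefschetz (Theorem~\ref{Lefschetz}) to count the $81$ isolated fixed points of a $3$-cycle, use semismallness (Proposition~\ref{semism}) to force those isolated points into the codimension-$2$ locus $\bigcup_t Fix(t)$ so that they become fixed points of all of $G$, and then feed everything into Principle~\ref{principle}. The paper does exactly this and arrives at
$$P_X = 1 + (6+m)t^2 + (4+4m)t^3 + (102+6m)t^4 + (4+4m)t^5 + (6+m)t^6 + t^8,$$
where $m$ is the number of abelian-surface components of $Fix(t)$ for a transposition $t$.

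The genuine gap is in how you propose to pin down $m$. You write that ``a careful Lefschetz-fixed-point count \dots will supply the component counts,'' but Theorem~\ref{Lefschetz} only yields a cardinality when the fixed locus is zero-dimensional; for a transposition the tangent action has eigenvalues $1,1,-1,-1$, so $Fix(t)$ is a union of abelian surfaces and Lefschetz gives you its dimension, nothing more. The number $m$ genuinely depends on data not contained in $\rho_\C$, and the paper closes this with a separate arithmetic argument: $Fix(t)$ is a subgroup of $A$ whose image in the $2$-torus $A/Fix(t)^0$ lies in the $2$-torsion, so $m$ is a power of $2$; on the other hand, for a second involution $t'$ one has $Fix(G)=Fix(t)\cap Fix(t')$ transversally, so $81=|Fix(G)|$ factors through the component counts and forces $m$ to be a power of $3$; hence $m=1$. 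Without this step (or an equivalent one) your computation leaves $m$ undetermined and the claim that the answer is independent of the integral lift is unproved. A secondary, smaller imprecision: semismallness does not make each $Fix(g)$ codimension $2$ --- the $3$-cycles have codimension-$4$ fixed loci --- it constrains the union $F$, which is what forces those isolated points to sit inside the transposition loci.
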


\begin{remark} In section \ref{Got} we have explained how to compute $P_X(t)$ when $X$
is the resolution of $(A^2)^n/S_{n+1}$, $A^2$ being a $2$-dimensional abelian variety; moreover we have done all the computations for  the case $S_4$. One can compute the case of $S_3$ in this way and it will turn out the same formula. So even if one can construct a different resolution of $A/G$, this resolution will have the same cohomological type of the $4$-dimensional case of Beauville's serie.  
\end{remark}

\begin{proof} The structure of conjugacy classes of subgroups of $D_6$ is very
simple, namely there are two proper non-trivial classes of the normal
subgroup $\Z_3$ and of three $\Z_2$ subgroups. By Lefschetz fixed-point
formula, \ref{Lefschetz},  a generator of $\Z_3$ have $81$ isolated fixed points,
which by the lemma \ref{semism}, are fixed points of the whole group. On
the other hand any order two element $t $ acts with fixed point being a
union of, say $m$, abelian surfaces. Since their Weyl group is trivial
the formula in \ref{principle} yields the following result
$$P_X = 1 + (6+m)t^2 + (4+4m)t^3 + (102+6m)t^4 + (4+4m)t^5 + (6+m)t^6 + t^8.$$
We have only to prove that m=1. The argument we use is similar to the one used at the end
of \ref{Kummer}. 

Note that all components of $Fix(t)$ are numerically
equivalent and $Fix(t)^0$ is a subgroup of $A$. This is because the action of $t$ is algebraic and they
are fibers of the map $A \ra A/Fix(t)^0$ (where $Fix(t)^0$ denotes the zero component of
 $Fix(t)$). 
The action of $t$ descends to the two dimensional torus  $A/Fix(t)^0$
as an involution with 16 fixed points of order two.
$Fix(t)$ is a subgroup of $A$ and it descends
to a subgroup of the group of points of order $2$ in $A/Fix(t)^0$, hence $m$
is a power of $2$.

On the other hand, let us take another involution $t'$  in $D_6$  then
$Fix(D_6)=Fix(t)\cap Fix(t')$ and since they intersect tranversaly we get 
$$81=|Fix(D_6)|=m(Fix(t)^0)\cdot m(Fix(t')^0).$$
Thus $m$ is also a power of $3$; all this implies that 
$m=1$.
\end{proof}

\subsection{The Dihedral group of order $8$.}

\begin{theorem} 
Let $G = D_8 =(\Z_2^2) \rtimes \Z_2$ be acting
on a $4$-dimensional complex torus $A$ so that its
representation on the complex cohomology is the sum of two copies of the standard
representation  of $D_8$ as motions of a square. 
Suppose that the quotient of the
action $A/G$ admits a symplectic resolution of singularities,
$X \ra A/G$. Then the complex cohomology of X is uniquely determined, that is
the Poincar\'e polynomial is as follows:
$$P_X(t) = t^8  + 23t^6  + 276t^4  + 23t^2  + 1.$$
\end{theorem}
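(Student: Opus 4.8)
The plan is to mirror the two preceding dihedral computations: first apply Principle \ref{principle} to reduce the Poincar\'e polynomial of $X$ to a small number of unknown stratum-multiplicities, and then pin down those multiplicities by combining the Lefschetz fixed-point formula \ref{Lefschetz} with the semismallness constraint \ref{semism}. First I would analyze the subgroup structure of $G=D_8$. The conjugacy classes of subgroups are: the center $\langle-1\rangle\iso\Z_2$, the normal $\Z_2^2$, the two classes of non-central reflection-type $\Z_2$'s (the ``diagonal'' and ``axis'' involutions, which in $D_8$ split into two conjugacy classes of two elements each), the cyclic $\Z_4$, and the whole group. Using the representation on $H^1(A,\C)$ as two copies of the standard $2$-dimensional representation, I would compute for each generator its eigenvalues on $\C^4$, hence determine (via \ref{Lefschetz}) which elements have isolated fixed points and which fix positive-dimensional abelian subvarieties.

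Next I would run the stratification. For the elements whose fixed locus is positive-dimensional (the reflections), each contributes a stratum $Y([H])$ whose components are abelian subvarieties, carrying a Weyl-group action and a McKay fiber $F([H])$; these introduce unknown multiplicities, say $m$ and $m'$, counting the number of components of the respective fixed loci. I expect the central involution $-1$ to act with isolated fixed points (eigenvalues all $-1$), and by \ref{semism} these isolated symplectic singularities cannot be resolved unless they lie in the fixed locus of some larger subgroup, so every fixed point of $-1$ must be forced into a lower stratum; this is the key rigidity. Assembling these contributions through the formula in \ref{principle} yields a Poincar\'e polynomial of $X$ whose coefficients are linear in the unknowns $m,m'$.

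To determine the unknowns, I would use exactly the counting argument from the $D_6$ and binary tetrahedral cases. Applying \ref{Lefschetz} I compute $|Fix(-1)|=2^8$ and $|Fix(G)|$ and the cardinalities of the intermediate fixed sets. The decomposition of $Fix(-1)$ as a disjoint union over the larger subgroups containing $-1$ (together with $Fix(G)$, counted with the correct intersection multiplicities) gives an integer equation; in parallel, writing $Fix(t)$ for a reflection as a subgroup of $A$ and pushing forward to the abelian surface $A/Fix(t)^0$ shows $m$ is simultaneously a power of $2$ (as a subgroup of the $2$-torsion of the quotient surface) and, via $81$ or its $D_8$-analogue from the transverse intersection formula $|Fix(G)|=m\cdot m'$, constrained to a specific value. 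Solving forces the multiplicities, and substituting back produces the stated $P_X(t)=t^8+23t^6+276t^4+23t^2+1$.

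The main obstacle I anticipate is the bookkeeping of the richer subgroup lattice of $D_8$: unlike $D_6=S_3$, here $D_8$ has a $\Z_4$ subgroup and two genuinely distinct classes of order-two reflections, so the semismallness/incidence analysis must correctly determine which reflection classes have positive-dimensional fixed loci, how their closures meet the $\Z_4$- and $\Z_2^2$-strata, and what the Weyl-group actions on the McKay fibers are. Getting the intersection pattern and the analogue of the transversality relation $|Fix(G)|=\prod m_i$ right—so that the resulting Diophantine constraints uniquely force all stratum multiplicities—is where the real work lies; the final polynomial arithmetic is routine once the multiplicities are fixed.
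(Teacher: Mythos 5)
Your proposal follows essentially the same route as the paper's proof: Lefschetz counts give $|Fix(-I)|=2^8$, $|Fix(\Z_4)|=2^4=|Fix(G)|$, semismallness forces $Fix(-I)\setminus Fix(G)$ into the two reflection strata, the resulting count $(a+b)\cdot 16-2|Fix(G)|=|Fix(-I)|-|Fix(G)|$ gives $a+b=17$, the powers-of-two argument pins down $a=1$, $b=16$, and Principle \ref{principle} then yields the stated polynomial. The only cosmetic difference is that you lean on the transversality relation $|Fix(G)|=ab\cdot(\cdot)$ as a determining constraint, whereas the paper uses it only as a consistency check, the decisive equation being $a+b=17$ combined with both being powers of $2$.
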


\begin{remark} Similarly to the previous case note that if $A= S \times S$, with $S$ an  abelian surface, the symplectic resolution exists, see \ref{sympl}. The $P_X(t)$ we obtain is therefore the Poincar\'e polynomial of this manifold, other possible resolutions will have the same polynomial.
\end{remark}

\begin{proof} 
Let us recall some trivial fact regarding the standard
representation of $D_8$ and its conjugacy classes. The center of the
group consists of $\langle -1 \rangle$. There are two conjugate elements of order $4$, namely 
$\pm \left(\begin{array}{rr}0&1\\-1&0\end{array}\right)$, and
two classes of elements of order $2$, namely 
$\pm A:= \pm \left(\begin{array}{rr}1&0\\0&-1\end{array}\right)$ and 
$\pm B:=\pm \left(\begin{array}{rr}0&1\\1&0\end{array}\right)$. 

From Lefschetz fixed point formula, \ref{Lefschetz}, 
we compute that the order $4$ elements have $2^4=16$ fixed points, which,
by the above lemma \ref{semism} are the fixed points of the whole group;
on the other hand $-I$   has
$2^8=256$ fixed points, including the previous $2^4$. 
The other  elements  of order two have fixed points set being union of abelian
surfaces; let us assume that $Fix(\pm A)$ has $a$
components and $Fix(\pm B)$ has $b$ components.

Again, by the lemma \ref{semism} the points $Fix(-I)\setminus Fix(G)$ are divided
among $Fix(\pm A)$ and $Fix(\pm B)$. On the other hand
the normalizer of each of the order $2$ element (different from $-I$)  is a group
generated by the element itself and $-I$. Therefore the Weyl group acts on
each $2$-dimensional component of such an element by involution, with $16$
points fixed. Thus we can count the points in the quotient whose
isotropy group is non-cyclic and different from the whole group $G$ 
to get the following identity

$|Fix(-I)|-|Fix(G)| = (a+b)\cdot 16 - 2\cdot|Fix G|$, hence $a+b = 17$.

On the other hand, as in the proof of the previuos
theorem, the numbers $a$ and $b$  are powers of $2$, therefore $a = 1$ and $b = 16$ (up
to choice of the conjugacy classes). 

Note that, as in the proof of the previous theorem, we could compute the intersection of the connected
components of each of the fixed point sets for both conjugacy
classes. For instance we get $ab \cdot Fix(A)^0\cdot Fix(B)^0 = |Fix(G)| = 16,$
hence the intersection is equal to $1$.

Let us then compute the cohomology:
the Poincare polynomial of the $4$-dimensional stratum, $X([1])$, is
$$(1 + 6t^2 + 22t^4 + 6t^6 + t^8) - 17(1 + 6t^2 + t^4 - 16) - 136;$$
the polynomial of the $3$ -dimensional strata is
$$17(1 + 6t^2 + t^4 - 16)(1 + t^2)$$
and the one of the $2$-dimensional strata is
$$120(1 + 2t^2 + t^4) + 16(1 + 2t^2 + 2t^4).$$
The Poincar\'e polynomial of $X$ is therefore
$$(1+6t^2+22t^4+6t^6+t^8)-17(1+6t^2+t^4-16)-136+
17(1+6t^2+t^4-16)(1+t^2)+$$
$$120(1+2t^2+t^4)+16(1+2t^2+2t^4)$$
which, after the simplification, becomes
$$t^8  + 23t^6  + 276t^4  + 23t^2  + 1$$
\end{proof}
 

\begin{thebibliography}{BPVdV84}

\bibitem[Bea83]{BeauvilleJDG}
Arnaud Beauville.
\newblock Vari\'et\'es {K}\"ahleriennes dont la premi\`ere classe de {C}hern
  est nulle.
\newblock {\em J. Differential Geom.}, 18(4):755--782 (1984), 1983.

\bibitem[Bel07]{Bell}
Gwyn Bellamy.
\newblock On singular calogero-moser spaces, http://arxiv.org/abs/0707.3694,
  2007.

\bibitem[BL04]{Birkenhake-Lange}
Christina Birkenhake and Herbert Lange.
\newblock {\em Complex abelian varieties}, volume 302 of {\em Grundlehren der
  Mathematischen Wissenschaften [Fundamental Principles of Mathematical
  Sciences]}.
\newblock Springer-Verlag, Berlin, second edition, 2004.

\bibitem[BM94]{BertinMarkushevich}
J.~Bertin and D.~Markushevich.
\newblock Singularit\'es quotients non ab\'eliennes de dimension {$3$} et
  vari\'et\'es de {C}alabi-{Y}au.
\newblock {\em Math. Ann.}, 299(1):105--116, 1994.

\bibitem[Bou68]{BourbakiLie}
N.~Bourbaki.
\newblock {\em \'{E}l\'ements de math\'ematique. {F}asc. {XXXIV}. {G}roupes et
  alg\`ebres de {L}ie. {C}hapitre {IV}: {G}roupes de {C}oxeter et syst\`emes de
  {T}its. {C}hapitre {V}: {G}roupes engendr\'es par des r\'eflexions.
  {C}hapitre {VI}: syst\`emes de racines}.
\newblock Actualit\'es Scientifiques et Industrielles, No. 1337. Hermann,
  Paris, 1968.

\bibitem[BPVdV84]{BarthPetersVandeVen}
W.~Barth, C.~Peters, and A.~Van~de Ven.
\newblock {\em Compact complex surfaces}, volume~4 of {\em Ergebnisse der
  Mathematik und ihrer Grenzgebiete (3) [Results in Mathematics and Related
  Areas (3)]}.
\newblock Springer-Verlag, Berlin, 1984.

\bibitem[Bre72]{Bredon}
Glen~E. Bredon.
\newblock {\em Introduction to compact transformation groups}.
\newblock Academic Press, New York, 1972.
\newblock Pure and Applied Mathematics, Vol. 46.

\bibitem[CH07]{CynkHulek}
S.~Cynk and K.~Hulek.
\newblock Higher-dimensional modular {C}alabi-{Y}au manifolds.
\newblock {\em Canad. Math. Bull.}, 50(4):486--503, 2007.

\bibitem[CR62]{CurtisReiner}
Charles~W. Curtis and Irving Reiner.
\newblock {\em Representation theory of finite groups and associative
  algebras}.
\newblock Pure and Applied Mathematics, Vol. XI. Interscience Publishers, a
  division of John Wiley \& Sons, New York-London, 1962.

\bibitem[CS07]{CynkSchuett}
Slawomir Cynk and Matthias Schuett.
\newblock Generalised kummer constructions and {W}eil restrictions,
  {http://arxiv.org/abs/0710.4565}, 2007.

\bibitem[Deb99]{Debarre}
Olivier Debarre.
\newblock On the {E}uler characteristic of generalized {K}ummer varieties.
\newblock {\em Amer. J. Math.}, 121(3):577--586, 1999.

\bibitem[DHZ06]{DaisHenkZiegler}
Dimitrios~I. Dais, Martin Henk, and G{\"u}nter~M. Ziegler.
\newblock On the existence of crepant resolutions of {G}orenstein abelian
  quotient singularities in dimensions {$\ge4$}.
\newblock In {\em Algebraic and geometric combinatorics}, volume 423 of {\em
  Contemp. Math.}, pages 125--193. Amer. Math. Soc., Providence, RI, 2006.

\bibitem[Don08]{Donten}
Maria Donten.
\newblock On kummer 3-folds, http://arxiv.org/abs/0812.3758, 2008.

\bibitem[Dur79]{Durfee15}
Alan~H. Durfee.
\newblock Fifteen characterizations of rational double points and simple
  critical points.
\newblock {\em Enseign. Math. (2)}, 25(1-2):131--163, 1979.

\bibitem[FH91]{FultonHarris}
William Fulton and Joe Harris.
\newblock {\em Representation theory}, volume 129 of {\em Graduate Texts in
  Mathematics}.
\newblock Springer-Verlag, New York, 1991.
\newblock A first course, Readings in Mathematics.

\bibitem[FN04]{FuNamikawa}
Baohua Fu and Yoshinori Namikawa.
\newblock Uniqueness of crepant resolutions and symplectic singularities.
\newblock {\em Ann. Inst. Fourier (Grenoble)}, 54(1):1--19, 2004.

\bibitem[Fu06]{FuSurvey}
Baohua Fu.
\newblock A survey on symplectic singularities and symplectic resolutions.
\newblock {\em Ann. Math. Blaise Pascal}, 13(2):209--236, 2006.

\bibitem[Fuj83]{Fujiki}
Akira Fujiki.
\newblock On primitively symplectic compact {K}\"ahler {$V$}-manifolds of
  dimension four.
\newblock In {\em Classification of algebraic and analytic manifolds ({K}atata,
  1982)}, volume~39 of {\em Progr. Math.}, pages 71--250. Birkh\"auser Boston,
  Boston, MA, 1983.

\bibitem[Ful93]{FultonToric}
William Fulton.
\newblock {\em Introduction to toric varieties}, volume 131 of {\em Annals of
  Mathematics Studies}.
\newblock Princeton University Press, Princeton, NJ, 1993.
\newblock ,The William H. Roever Lectures in Geometry.

\bibitem[GK04]{GiKa}
Victor Ginzburg and Dmitry Kaledin.
\newblock Poisson deformations of symplectic quotient singularities.
\newblock {\em Adv. Math.}, 186(1):1--57, 2004.

\bibitem[G{\"o}t93]{Gottsche}
Lothar G{\"o}ttsche.
\newblock {\em Hilbertschemata nulldimensionaler {U}nterschemata glatter
  {V}ariet\"aten}.
\newblock Bonner Mathematische Schriften [Bonn Mathematical Publications], 243.
  Universit\"at Bonn Mathematisches Institut, Bonn, 1993.
\newblock Dissertation, Rheinische Friedrich-Wilhelms-Universit\"at Bonn, Bonn,
  1991.

\bibitem[G{\"o}t94]{Gottschebook}
Lothar G{\"o}ttsche.
\newblock {\em Hilbert schemes of zero-dimensional subschemes of smooth
  varieties}, volume 1572 of {\em Lecture Notes in Mathematics}.
\newblock Springer-Verlag, Berlin, 1994.

\bibitem[GS93]{GottscheSoergel}
Lothar G{\"o}ttsche and Wolfgang Soergel.
\newblock Perverse sheaves and the cohomology of {H}ilbert schemes of smooth
  algebraic surfaces.
\newblock {\em Math. Ann.}, 296(2):235--245, 1993.

\bibitem[HH90]{Hirzebruch}
Friedrich Hirzebruch and Thomas H{\"o}fer.
\newblock On the {E}uler number of an orbifold.
\newblock {\em Math. Ann.}, 286(1-3):255--260, 1990.

\bibitem[JK81]{JamesKerber}
Gordon James and Adalbert Kerber.
\newblock {\em The representation theory of the symmetric group}, volume~16 of
  {\em Encyclopedia of Mathematics and its Applications}.
\newblock Addison-Wesley Publishing Co., Reading, Mass., 1981.
\newblock With a foreword by P. M. Cohn, With an introduction by Gilbert de B.
  Robinson.

\bibitem[Kal02]{KaledinInv}
D.~Kaledin.
\newblock Mc{K}ay correspondence for symplectic quotient singularities.
\newblock {\em Invent. Math.}, 148(1):151--175, 2002.

\bibitem[Kal03]{KaledinSelecta}
D.~Kaledin.
\newblock On crepant resolutions of symplectic quotient singularities.
\newblock {\em Selecta Math. (N.S.)}, 9(4):529--555, 2003.

\bibitem[KP02]{FinGpsSurveyAMM}
James Kuzmanovich and Andrey Pavlichenkov.
\newblock Finite groups of matrices whose entries are integers.
\newblock {\em Amer. Math. Monthly}, 109(2):173--186, 2002.

\bibitem[Kum75]{Kummer}
Ernst~Eduard Kummer.
\newblock {\em {\"Uber die Fl\"achen vierten Grades mit sechzehn singul\"aren
  Punkten}}.
\newblock Springer-Verlag, Berlin, 1975.
\newblock Collected papers, Volume II: Function theory, geometry and
  miscellaneous, Edited and with a foreward by Andr\'e Weil.

\bibitem[LS08]{Lehn-Sorger}
Manfred Lehn and Christoph Sorger.
\newblock A symplectic resolution for the binary tetrahedral group,
  http://arxiv.org/abs/0810.3225, 2008.

\bibitem[New72]{Newman}
Morris Newman.
\newblock {\em Integral matrices}.
\newblock Academic Press, New York, 1972.
\newblock Pure and Applied Mathematics, Vol. 45.

\bibitem[NW02]{Nieper2}
Marc~A. Nieper-Wi{\ss}kirchen.
\newblock On the {C}hern numbers of generalised {K}ummer varieties.
\newblock {\em Math. Res. Lett.}, 9(5-6):597--606, 2002.

\bibitem[NW04]{Nieper1}
Marc~A. Nieper-Wi{\ss}kirchen.
\newblock On the elliptic genus of generalised {K}ummer varieties.
\newblock {\em Math. Ann.}, 330(2):201--213, 2004.

\bibitem[OS01]{Oguiso}
Keiji Oguiso and Jun Sakurai.
\newblock Calabi-{Y}au threefolds of quotient type.
\newblock {\em Asian J. Math.}, 5(1):43--77, 2001.

\bibitem[PR05]{Paranjape}
Kapil Paranjape and Dinakar Ramakrishnan.
\newblock Quotients of {$E\sp n$} by {$\mathfrak a\sb {n+1}$} and
  {C}alabi-{Y}au manifolds.
\newblock In {\em Algebra and number theory}, pages 90--98. Hindustan Book
  Agency, Delhi, 2005.

\bibitem[Rei87]{ReidYPG}
Miles Reid.
\newblock Young person's guide to canonical singularities.
\newblock In {\em Algebraic geometry, Bowdoin, 1985 (Brunswick, Maine, 1985)},
  volume~46 of {\em Proc. Sympos. Pure Math.}, pages 345--414. Amer. Math.
  Soc., Providence, RI, 1987.

\bibitem[Rei02]{ReidBourbaki}
Miles Reid.
\newblock La correspondance de {M}c{K}ay.
\newblock {\em Ast\'erisque}, (276):53--72, 2002.
\newblock S\'eminaire Bourbaki, Vol.\ 1999/2000.

\bibitem[Saw04]{SawonPhD}
Justin Sawon.
\newblock Rozansky-{W}itten invariants of hyperk\"ahler manifolds, {PhD}
  {C}ambridge, 2004.

\bibitem[{Sch}07]{maxima}
{William} {Schelter et al}.
\newblock Maxima, a computer algebra system, available at
  http://maxima.sourceforge.net/, 1982-2007.

\bibitem[Tot02]{Totaro}
B.~Totaro.
\newblock Topology of singular algebraic varieties.
\newblock In {\em Proceedings of the International Congress of Mathematicians,
  Vol. II (Beijing, 2002)}, pages 533--541, Beijing, 2002. Higher Ed. Press.

\bibitem[Ver00]{VerbitskyAsian}
Misha Verbitsky.
\newblock Holomorphic symplectic geometry and orbifold singularities.
\newblock {\em Asian J. Math.}, 4(3):553--563, 2000.

\end{thebibliography}

\end{document}